\numberwithin{equation}{section}
\newtheorem{cor}[equation]{Corollary}
\newtheorem{lem}[equation]{Lemma}
\newtheorem{thm}[equation]{Theorem}
\newtheorem{quest}[equation]{Question}
\newtheorem{Example}[equation]{Example}
\newenvironment{ex}{\begin{Example}\rm}{\end{Example}}
\newtheorem{remark}[equation]{Remark}
\newenvironment{rmk}{\begin{remark}\rm}{\end{remark}}
\def\co{\colon\thinspace}
\newcommand{\epi}{\mbox{epi\,}}
\newcommand{\supp}{\mbox{supp\,}}
\newcommand{\Hess}{\mbox{Hess}}
\newcommand{\e}{\varepsilon}
\def\a{\alpha}
\def\de{\delta}
\def\g{\gamma}
\def\o{\omega}
\def\d{\partial}
\def\o{\omega}
\def\s{\sigma}
\def\l{\lambda}
\def\R{\mathbb{R}}
\def\N{\mathbb{Z}_{\ge 0}}
\def\Z{\mathbb{Z}}
\def\Z{\mathbb{Z}}
\def\S1{\bf S^1}
\newcommand{\C}{{\mathbb C}}
\def\equalsfill{$\m@th\mathord=\mkern-7mu
\cleaders\hbox{$\!\mathord=\!$}\hfill
\mkern-7mu\mathord=$}
\begin{document}

\abovedisplayskip=6pt plus3pt minus3pt
\belowdisplayskip=6pt plus3pt minus3pt

\title[Smoothness of Minkowski sum and generic rotations]
{\bf Smoothness of Minkowski sum and generic rotations}

\thanks{\it 2000 Mathematics Subject classification.\rm\ 
Primary 52A41, Secondary 52A10.
\it\ Keywords:\rm\ convex body, Minkowski sum, smoothness, infimal convolution, sums of Cantor sets.}\rm

\author{Igor Belegradek\and Zixin Jiang}

\address{Igor Belegradek\\School of Mathematics\\ Georgia Institute of
Technology\\ Atlanta, GA, USA 30332-0160}\email{ib@math.gatech.edu}

\address{Zixin Jiang\\Department of Mathematics\\
University of California, Berkeley\\
970 Evans Hall, Berkeley, CA 94720-3840}\email{zixin\_jiang@berkeley.edu}


\date{}
\begin{abstract}
Can the Minkowski sum of two convex bodies be made smoother by rotating one of them? 
We construct two $C^\infty$ strictly convex plane bodies such that
after any generic rotation (in the Baire category sense) of one of the summands the  Minkowski sum is not $C^5$.
On the other hand, if for one of the bodies the zero set of the Gaussian curvature 
has countable spherical image, we show that any generic rotation  
makes their Minkowski sum as smooth as the summands. 
We also improve and clarify some previous results on smoothness of the Minkowski sum.
\end{abstract}
\maketitle

\section{Introduction}

A {\em convex body\,} in $\R^n$ is a compact convex set with non-empty interior.
A {\em plane convex body\,} is a convex body in $\R^2$.
A convex body is {\it $C^{k,\a}$\,} if its boundary is a $C^{k,\a}$ submanifold of $\R^n$.
Here $0\le \a\le 1$ and $k$ is a nonnegative integer, $\infty$ or $\o$. As usual 
$C^\o$ means analytic, while $C^{k,\a}$ refers to functions whose $k$th partial derivatives
are continuous if $\a=0$, Lipschitz if $\a=1$, and $\a$-H\"older continuous if $0<\a<1$.
We follow the convention $\o>\infty>l$ for any $l\in \Z$. 
The {\em Minkowski sum\,} $A+B=\{a+b:a\in A, b\in B\}$ of two convex bodies $A$, $B$ in $\R^n$ is 
a convex body in $\R^n$. 

S.~Krantz and H.~Parks showed in~\cite{KraPar} that if $A$ is $C^{1,\a}$, then so is $A+B$. 
In dimensions $n\ge 3$ there are examples where $A$, $B$ are $C^\infty$ but $A+B$ is not $C^2$, 
see~\cite[Theorem 3.4]{Kis-shad} for $n=3$ and~\cite{Bom-analy} for $n\ge 4$. 
For $n=2$ one has: \ \vspace{-2pt}
\begin{itemize}
\item 
If $A$, $B$ are $C^\o$, then $A+B$ is $C^{6,\frac{2}{3}}$
and this is sharp~\cite{Kis, Kis-regul}. \vspace{3pt}
\item
If $A$, $B$ are $C^k$ with $k\in \{1,2,3,4\}$, then $A+B$ is $C^k$, and this is sharp
in the sense that for any $\a\in (0,1)$ there are $C^\infty$ plane convex bodies $A$, $B$ 
such that $A+B$ is not $C^{4,\a}$~\cite{Bom-sum}. 
\end{itemize}

For a $C^2$ convex body $C$ in $\R^n$ let 
$\nu_{_{\!C}}\co\d C\to S^{n-1}$ denote the {\em Gauss map\,} given by the unit outer normal, let $G_{_{\!C}}$ 
be the corresponding {\em Gaussian curvature}, i.e. the product of the eigenvalues of the differential of $\nu_{_{\!C}}$.

Let us start from a basic result on smoothness of the Minkowski sum that improves on some 
work in~\cite{Bom-analy, KraPar, Gho} discussed in Remark~\ref{rmk: hist on smoothness}.

\begin{thm}
\label{thm: a+b C^k} 
Let $A$, $B$ be $C^2$ convex bodies in $\R^n$, and let $a$, $b$, $a+b$ be
boundary points of $A$, $B$, $A+B$, respectively.
If $A$ is $C^{k,\a}$ near $a$ and $G_{_{\!A}}(a)\neq 0$, then \vspace{-3pt}
\begin{itemize}
\item[\textup(i)] 
$A+B$ is $C^2$ near $a+b$,\vspace{3pt}
\item[\textup(ii)]
$G_{_{\!B}}(b)\neq 0$ 
if and only if 
$G_{_{\!A+B}}(a+b)\neq 0$, \vspace{3pt}
\item[\textup(iii)]  
$B$ is $C^{k,\a}$ near $b$ 
if and only if 
$A+B$ is $C^{k,\a}$ near $a+b$.
\end{itemize}
\end{thm}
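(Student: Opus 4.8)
The plan is to localize to epigraphs, identify $\d(A+B)$ near $a+b$ with the graph of an infimal convolution, and use that $G_{_{\!A}}(a)\neq0$ makes $A$ uniformly convex near $a$, so that the relevant Legendre‑type change of variables is a diffeomorphism of the expected regularity.

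\textbf{Step 1: localization.} Since $A$ is $C^2$ it has a unique outer normal $u:=\nu_{_{\!A}}(a)$ at $a$. Because $a+b\in\d(A+B)$, the normal cone of $A+B$ at $a+b$ is the intersection of the normal cones of $A$ at $a$ and of $B$ at $b$; as the former is the ray through $u$, so is that intersection, and $u=\nu_{_{\!B}}(b)$. Choose coordinates with $u=-e_n$ and translate $a,b$ to the origin. Then for small $\rho>0$ there are convex $f,g\co B_\rho^{n-1}\to\R$ with $f(0)=g(0)=0$ and $\nabla f(0)=\nabla g(0)=0$ whose graphs are $\d A,\d B$ near $a,b$; here $f$ is $C^{k,\a}$, while $G_{_{\!A}}(a)\neq0$ and convexity give $P:=D^2f(0)\succ0$ (hence $D^2f\succeq cI$ after shrinking $\rho$), and $Q:=D^2g(0)\succeq0$. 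Since $P$ is invertible, $\nu_{_{\!A}}$ restricts to a homeomorphism of a neighborhood of $a$, so the summand of $A$ contributing to the lowest point of $A+B$ over $z$ stays near $a$ for $z$ near $0$; a compactness argument then shows that near $a+b$ the boundary $\d(A+B)$ is the graph of $\phi(z):=\min_x\bigl(f(z-x)+g(x)\bigr)$, the minimum over $x$ near $0$. As $f,g,\phi$ are convex and vanish together with their gradients at $0$, statements (i)--(iii) become assertions about the regularity and the Hessian of $\phi$ at $0$; we may assume $2\le k\le\infty$ or $k=\omega$, the case $k\le1$ being trivial since $A,B$ are $C^2$.

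\textbf{Step 2: the change of variables.} For $z$ near $0$ the strictly convex function $x\mapsto f(z-x)+g(x)$ has a unique minimizer $x(z)$, characterized by $\nabla f(z-x)=\nabla g(x)$, and by the envelope (subdifferential) identity for infimal convolutions $\phi$ is differentiable with $\nabla\phi(z)=\nabla f(z-x(z))=\nabla g(x(z))$. Since $P$ is invertible, $\nabla f$ is a $C^{k-1,\a}$‑diffeomorphism near $0$ fixing $0$, and rewriting the first‑order condition gives $z=\Psi(x(z))$, where
\[
\Psi:=\id+(\nabla f)^{-1}\!\circ\nabla g,\qquad\text{hence}\qquad\nabla\phi=\nabla g\circ\Psi^{-1}.
\]
Now $\Psi$ is $C^1$ with $D\Psi(0)=I+P^{-1}Q$ invertible, so $\Psi$ is a local $C^1$‑diffeomorphism; a direct check using $\Psi(x)-x=(\nabla f)^{-1}(\nabla g(x))$ shows $\Psi^{-1}=\id-(\nabla f)^{-1}\!\circ\nabla\phi=:\Theta$, so symmetrically $\nabla g=\nabla\phi\circ\Theta^{-1}$ with $D\Theta(0)=D\Psi(0)^{-1}$ invertible.

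\textbf{Step 3: conclusions.} Since $\nabla g\in C^1$ and $(\nabla f)^{-1}\in C^{k-1,\a}$, the identity $\nabla\phi=\nabla g\circ\Psi^{-1}$ gives $\nabla\phi\in C^1$, i.e.\ $\phi\in C^2$; this is (i). Differentiating at $0$ yields $D^2\phi(0)=Q(I+P^{-1}Q)^{-1}=Q-Q(P+Q)^{-1}Q$, which is symmetric, positive semidefinite, has the same rank as $Q$, and equals $(P^{-1}+Q^{-1})^{-1}$ when $Q$ is invertible; since $\nabla\phi(0)=0$ one has $G_{_{\!A+B}}(a+b)=\det D^2\phi(0)$ and $G_{_{\!B}}(b)=\det Q$, so (ii) holds. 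For (iii): if $g$ is $C^{k,\a}$ near $b$ then $\nabla g\in C^{k-1,\a}$, whence $\Psi\in C^{k-1,\a}$, $\Psi^{-1}\in C^{k-1,\a}$ by the inverse function theorem, and $\nabla\phi=\nabla g\circ\Psi^{-1}\in C^{k-1,\a}$, i.e.\ $\phi\in C^{k,\a}$; conversely if $A+B$ is $C^{k,\a}$ near $a+b$ then $\nabla\phi\in C^{k-1,\a}$, so $\Theta\in C^{k-1,\a}$, $\Theta^{-1}\in C^{k-1,\a}$, and $\nabla g=\nabla\phi\circ\Theta^{-1}\in C^{k-1,\a}$, i.e.\ $g\in C^{k,\a}$. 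The cases $k\in\{\infty,\omega\}$ use the $C^\infty$, resp.\ analytic, inverse function theorem and closedness under composition. The main obstacle is Step~1: one must verify carefully that near $a+b$ the boundary of the \emph{global} Minkowski sum is indeed the graph of the \emph{local} infimal convolution --- equivalently, that minimizing splittings never leave the charts --- which is precisely where uniqueness and continuity of the Gauss map of $A$ near $a$ enter; the remaining steps are routine convex analysis and calculus, the analytic case being the only mild subtlety.
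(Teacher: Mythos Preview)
Your proof is correct and follows essentially the same strategy as the paper: localize the boundaries to graphs so that $\d(A+B)$ becomes the graph of an infimal convolution, then use $G_{_{\!A}}(a)\neq 0$ to make $\nabla f$ a local $C^{k-1,\a}$-diffeomorphism and push regularity back and forth via the implicit/inverse function theorem (your maps $\Psi=\id+(\nabla f)^{-1}\circ\nabla g$ and $\Theta=\id-(\nabla f)^{-1}\circ\nabla\phi$ are exactly the paper's $I-\mu$ and its inverse, and your Hessian identity $D^2\phi(0)=Q(I+P^{-1}Q)^{-1}$ is the paper's formula~(\ref{form: hess with mu}) at the origin). The one place the arguments diverge is the localization in Step~1: where you sketch a compactness/uniqueness argument, the paper instead adds a small cone $C(\e)$ to each body so that the epigraphs become graphs of globally defined convex functions on the tangent hyperplane, which makes the identification $h=f\Box g$ automatic.
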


For example, if $A$ is a round ball
and $B$ is any $C^2$ convex body of the same dimension, then
the boundaries of $B$ and $A+B$ are parallel 
convex hypersurfaces and according to Theorem~\ref{thm: a+b C^k} 
if one of them is $C^{k,\a}$, so is the other one. 
More generally, if $G_{_{\!A}}$ is nowhere zero
and either $A$ is $C^\o$ or $B$ is less smooth than $A$, then
Theorem~\ref{thm: a+b C^k} gives the optimal regularity for $A+B$, 
which is the largest $k$, $\a$ such that both $A$ and $B$ are $C^{k,\a}$. 

If $B$ is at least as smooth as $A$, then the regularity of $A+B$ predicted by
Theorem~\ref{thm: a+b C^k} need not be optimal, and indeed,
$A+B$ can be smoother at $a+b$ than $A$ at $a$.
Let us sketch two different ways of how this could happen.
In Example~\ref{ex: sum of epigraphs} we describe a family of Minkowski sums $A+B$
such that $A$, $B$ are $C^\o$ and they can be can be cut and rearranged resulting in the Minkowski sum 
$A^\prime+B^\prime=A+B$ where $A^\prime$, $B^\prime$ are merely $C^{k}$
for any given $k\ge 2$, or even $C^{1,1}$.
In Example~\ref{ex: flat face} we note that if $b$ lies in the interior of a flat face $F$ of $B$,
then $a+F$ is a face of $A+B$, which makes
$A+B$ analytic near $a+b$, no matter how (non)smooth $A$ is at $a$.

This paper was motivated by the following question of M.~Ghomi:  
{\em Can the Minkowski sum of convex bodies be made smoother by rotating one of the summands?\,} 
More precisely, if $A$, $B$ are $C^{k,\a}$, we seek rotations $R$ such that
$R(A)+B$ is $C^{k,\a}$.
 
Recall that the boundary points $a$, $b$, $a+b$ have the same spherical image.
Hence in view of Theorem~\ref{thm: a+b C^k} it is relevant to consider the set
$Z_{_{\!C}}=\{\nu_{_{\!C}}(x) : G_{_{\!C}}(x)=0\}$, the spherical image
of the zero set of the Gaussian curvature.
Clearly any rotation $R\in SO(n)$ maps $Z_{_{\!C}}$ to $Z_{_{\!R(C)}}$. 
Now a natural question is: Can one always rotate $Z_{_{\!A}}$ off $Z_{_{\!B}}$?
For such rotations $R(A)+B$ will be $C^{k,\a}$.
To this end we prove:

\begin{thm}
\label{thm: countable zero set}
Let $A$, $B$ be $C^{k,\a}$ convex bodies in $\R^n$
where $k\ge 2$ is an integer, $\infty$ or $\o$, and $\a\in [0,1]$. If either 
$Z_{_{\!A}}$ or $Z_{_{\!B}}$ is countable, then  
$R(A)+B$ is $C^{k,\a}$ for any generic $R\in SO(n)$.
\end{thm}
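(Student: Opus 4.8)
The plan is to reduce the statement to a transversality-type argument about moving the set $Z_{_{\!A}}$ off $Z_{_{\!B}}$ by rotations, and then invoke Theorem~\ref{thm: a+b C^k} pointwise. First I would record the following consequence of Theorem~\ref{thm: a+b C^k}: if $R\in SO(n)$ is such that $Z_{_{\!R(A)}}\cap Z_{_{\!B}}=\varnothing$, then $R(A)+B$ is $C^{k,\a}$ everywhere. Indeed, let $a+b$ be any boundary point of $R(A)+B$ with $a\in\d R(A)$, $b\in\d B$ the points having the same outer unit normal $u$. If $u\notin Z_{_{\!R(A)}}$ then $G_{_{\!R(A)}}(a)\neq 0$ and Theorem~\ref{thm: a+b C^k}(iii) gives that $R(A)+B$ is $C^{k,\a}$ near $a+b$ because $B$ is; if instead $u\in Z_{_{\!R(A)}}$, then $u\notin Z_{_{\!B}}$ by hypothesis, so $G_{_{\!B}}(b)\neq 0$ and the symmetric form of Theorem~\ref{thm: a+b C^k}(iii) (with the roles of the summands exchanged) applies. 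Hence it suffices to show that
\[
\mathcal{G}=\{R\in SO(n): R(Z_{_{\!A}})\cap Z_{_{\!B}}=\varnothing\}
\]
is generic, i.e. contains a dense $G_\delta$ subset of $SO(n)$. Since $R\mapsto R(Z_{_{\!A}})=Z_{_{\!R(A)}}$, and $Z_{_{\!A}}$, $Z_{_{\!B}}$ are compact (being continuous images of the closed zero sets of $G$ on the compact boundaries), $\mathcal{G}$ is visibly open, so the whole point is density.

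For density, assume say $Z_{_{\!A}}=\{z_1,z_2,\dots\}$ is countable (the case $Z_{_{\!B}}$ countable is symmetric, replacing $B$ by $R(A)$ after writing $R(A)+B=R(A)+B$ and using $R^{-1}$). For each fixed $i$ and each fixed $w\in Z_{_{\!B}}$, the set of $R\in SO(n)$ with $R(z_i)=w$ is a coset of the stabilizer $SO(n-1)$ of $w$, hence a submanifold of positive codimension; letting $w$ range over $Z_{_{\!B}}$ gives that
\[
\mathcal{B}_i=\{R\in SO(n): R(z_i)\in Z_{_{\!B}}\}
\]
is the preimage of $Z_{_{\!B}}$ under the submersion $R\mapsto R(z_i)$ from $SO(n)$ onto $S^{n-1}$. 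Since $Z_{_{\!B}}$ is a closed subset of $S^{n-1}$ whose complement is dense (because $G_{_{\!B}}$ is not identically zero — $B$ has nonempty interior, so its boundary has points of positive curvature, e.g. where a smallest enclosing sphere touches), the complement of $\mathcal{B}_i$ is open and dense in $SO(n)$. Therefore $SO(n)\setminus\mathcal{G}=\bigcup_i \mathcal{B}_i$ is a countable union of closed nowhere dense sets, so $\mathcal{G}$ is a dense $G_\delta$ by the Baire category theorem (here using that $SO(n)$ is a compact, hence Baire, space). This establishes genericity of $\mathcal{G}$ and completes the proof.

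The main obstacle I anticipate is not the category/transversality bookkeeping, which is routine, but making sure the pointwise input is applied correctly: Theorem~\ref{thm: a+b C^k} requires both $A$ and $B$ to be $C^2$ (true, since $k\ge 2$) and, in part (iii), that the \emph{other} body have nonvanishing curvature at the relevant point. So one must be careful that for \emph{every} normal direction $u\in S^{n-1}$, at least one of $R(A)$, $B$ has nonzero Gaussian curvature at the point with normal $u$; the condition $Z_{_{\!R(A)}}\cap Z_{_{\!B}}=\varnothing$ is exactly this. A secondary subtlety is that when $u\notin Z_{_{\!R(A)}}\cup Z_{_{\!B}}$ either version of (iii) applies and one should check they give the same (correct) conclusion; and one should note that $C^\infty$ and $C^\o$ are handled by taking $k$ arbitrarily large, or directly, since Theorem~\ref{thm: a+b C^k} already allows $k\in\{\infty,\o\}$. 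None of these points is deep, but they are where a careless write-up could go wrong.
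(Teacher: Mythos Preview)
Your overall structure matches the paper's proof: define $\mathcal{G}=\{R:R(Z_{_{\!A}})\cap Z_{_{\!B}}=\varnothing\}$, reduce to Theorem~\ref{thm: a+b C^k}, and show $\mathcal{G}$ is comeager by writing its complement as a countable union of preimages of $Z_{_{\!B}}$ under the submersions $R\mapsto R(z_i)$. Your observation that $\mathcal{G}$ is actually open (not merely $G_\delta$) is correct and slightly sharper than what the paper states.

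There is, however, one genuine gap. For $\mathcal{B}_i=\{R:R(z_i)\in Z_{_{\!B}}\}$ to be nowhere dense you need $Z_{_{\!B}}$ to be nowhere dense in $S^{n-1}$, not merely a proper subset. Your justification---``$G_{_{\!B}}$ is not identically zero \dots\ e.g.\ where a smallest enclosing sphere touches''---only produces \emph{one} direction outside $Z_{_{\!B}}$; it does not rule out $Z_{_{\!B}}$ containing an open set. If $Z_{_{\!B}}$ had nonempty interior, the preimage under the submersion $R\mapsto R(z_i)$ would also have nonempty interior, and the Baire argument would collapse.

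The paper fills this gap with Sard's theorem: $Z_{_{\!B}}$ is precisely the set of critical values of the Gauss map $\nu_{_{\!B}}$ (the critical points being where $\det d\nu_{_{\!B}}=G_{_{\!B}}$ vanishes), and since $B$ is $C^2$ the Gauss map is $C^1$ between equal-dimensional manifolds, so Sard applies and $Z_{_{\!B}}$ has measure zero. Being closed and of measure zero, it is nowhere dense. Replace your enclosing-sphere remark with this, and the argument is complete.
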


Recall that a rotation in $SO(n)$ with a certain property is {\em generic\,}
if the set of rotations with the property is {\em comeager}, i.e., 
a countable intersection of open dense subsets of $SO(n)$. Thus the conclusion of
Theorem~\ref{thm: countable zero set} is that the set of rotations
$R$ such that $R(A)+B$ is $C^{k,\a}$ is comeager in $SO(n)$.
Standard Baire category considerations, see e.g.~\cite[Sections 8A, 8B]{Kec-book}, 
show that if $S$ is a comeager subset of $SO(n)$, then
$SO(n)\setminus S$ is not comeager (even though 
it could be uncountable and dense), and furthermore,
$S$ contains an uncountable dense $G_\de$ subset of $SO(n)$.

\begin{proof}[Proof of Theorem~\ref{thm: countable zero set}]
Set $R_{A,B}=\{R\in SO(n): R(Z_{_{\!A}})\cap Z_{_{\!B}}=\emptyset\}$.
By Theorem~\ref{thm: a+b C^k} above $R(A)+B$ is $C^{k,\a}$ for every $R\in R_{A,B}$.
We are going to show that $R_{A,B}$ is a dense $G_\delta$ subset of $SO(n)$.
Let $C\in\{A,B\}$. Since the Gaussian curvature is continuous,
$Z_C$ is closed. Also $Z_C$ is precisely the set of singular values
for the Gauss map of $C$, so by Sard's theorem
$Z_C$ has Lebesgue measure zero, and hence it is nowhere dense.
(Sard's theorem applies to $C^1$ maps,
and the Gauss map is $C^1$ since $C$ is $C^2$). 
For any $x\in S^{n-1}$ the map $p_x\co SO(n)\to S^{n-1}$ given by $p_x(R)=R(x)$
is a submersion, hence the $p_x$-preimage of any closed nowhere dense
subset is closed and nowhere dense. Since
$p_x^{-1}(Z_B)=\{R\in SO(n)\,|\, R(x)\in Z_B\}$,
we conclude that $R(Z_A)\cap Z_B\neq\emptyset$ if and only if 
$R\in\bigcup_{_{x\in Z_A}} p_x^{-1}(Z_B)$, which is a union of nowhere dense closed sets 
indexed by $Z_A$, and which is precisely $SO(n)\setminus R_{A,B}$.
Now if $Z_A$ is countable, then by the Baire category theorem any countable union of nowhere dense
sets has empty interior. Hence $R_{A,B}$ is dense. Since $SO(n)\setminus R_{A,B}$ is a countable union of closed sets,
$R_{A,B}$ is $G_\de$, i.e., a countable intersection of open sets.  
The case when $Z_B$ is countable follows by observing that
the self-map of $SO(n)$ taking an element to its inverse maps $R_{A,B}$
to $R_{B,A}$, and like any homeomorphism preserves the property of being dense and $G_\delta$.
\end{proof}

\begin{cor}
\label{cor: anal}
If $A$, $B$ are $C^2$ plane convex bodies that are $C^\o$ away from countable subsets of $\d A$, $\d B$, 
then $R(A)+B$ is $C^\o$ for any generic $R\in SO(2)$.
\end{cor}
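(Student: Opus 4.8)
The plan is to mimic the proof of Theorem~\ref{thm: countable zero set}, but applying Theorem~\ref{thm: a+b C^k} pointwise rather than quoting Theorem~\ref{thm: countable zero set} itself, since the summands here are globally only $C^2$. Let $N_A\subseteq\d A$ be the set of points near which $A$ fails to be $C^\o$; it is closed (analyticity being a local, open condition) and, lying inside the given exceptional set, countable, and similarly for $N_B$. Put $\hat Z_A=Z_A\cup\nu_A(N_A)$ and $\hat Z_B=Z_B\cup\nu_B(N_B)$, the sets of unit vectors that are either singular values of the Gauss map or spherical images of non-analytic boundary points, and set $R_{A,B}=\{R\in SO(2):R(\hat Z_A)\cap\hat Z_B=\emptyset\}$. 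I would establish three things: (a) $\hat Z_A$ and $\hat Z_B$ are countable; (b) $R_{A,B}$ is comeager in $SO(2)$; (c) for every $R\in R_{A,B}$ the body $R(A)+B$ is $C^\o$ away from a countable subset of its boundary, and at the exceptional points it is exactly as smooth as the corresponding points of the summands --- in particular $C^\o$ everywhere when $N_A$ and $N_B$ are empty.

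For (a): $\nu_A(N_A)$ is countable since $N_A$ is, so the issue is $Z_A$. As $\d A\setminus N_A$ is open in $\d A$, it is a countable disjoint union of open arcs, and on each such arc $A$ is $C^\o$, hence the Gaussian curvature is a real-analytic function of arclength there. On a given arc that function is therefore either identically zero --- in which case the arc is a straight segment, with one-point spherical image --- or has a discrete, hence countable, zero set; so $\nu_A\bigl(\{G_A=0\}\setminus N_A\bigr)$ is countable, and adding $\nu_A\bigl(\{G_A=0\}\cap N_A\bigr)\subseteq\nu_A(N_A)$ keeps $Z_A$ countable. (Planarity is essential here.) For (b): one repeats the Baire-category argument from the proof of Theorem~\ref{thm: countable zero set}; in $SO(2)$ the evaluation $p_x\co SO(2)\to S^1$, $R\mapsto R(x)$, is a diffeomorphism, so $p_x^{-1}$ of a countable set is countable, whence $SO(2)\setminus R_{A,B}=\bigcup_{x\in\hat Z_A}p_x^{-1}(\hat Z_B)$ is a meager $F_\s$ set and $R_{A,B}$ is a dense $G_\de$.

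For (c), fix $R\in R_{A,B}$ and a boundary point $a+b$ of $R(A)+B$, $a\in\d R(A)$ and $b\in\d B$ sharing the outer normal $u$. Since $R(\hat Z_A)\cap\hat Z_B=\emptyset$, either $u\notin R(\hat Z_A)$ or $u\notin\hat Z_B$. In the former case $R(A)$ is $C^\o$ near $a$ and $G_{R(A)}(a)\neq0$, so Theorem~\ref{thm: a+b C^k}(iii), applied with $R(A)$ in the role of ``$A$'', makes $R(A)+B$ exactly as smooth near $a+b$ as $B$ is near $b$ --- which is $C^\o$ unless $b\in N_B$, i.e.\ unless $u\in\nu_B(N_B)$; in the latter case, symmetrically, $R(A)+B$ is exactly as smooth near $a+b$ as $R(A)$ is near $a$, hence $C^\o$ unless $u\in R(\nu_A(N_A))$. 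Thus $R(A)+B$ is $C^\o$ at every boundary point whose normal lies outside the countable set $\nu_B(N_B)\cup R(\nu_A(N_A))$. The step I expect to be the genuine obstacle is (a) --- one has to be sure the curvature cannot vanish on a nondegenerate subarc without that subarc being a segment, which is exactly where real-analyticity in one variable enters; the only other point requiring care is that in (c) Theorem~\ref{thm: a+b C^k} must be invoked at each exceptional boundary point with the summand of nonzero curvature in the role of ``$A$'', which is permissible precisely because $R(\hat Z_A)$ and $\hat Z_B$ were arranged disjoint. (One could instead obtain $R(A)+B\in C^2$ for generic $R$ straight from Theorem~\ref{thm: countable zero set} once $Z_A$ is known countable, then upgrade to analyticity off the exceptional set by the same pointwise use of Theorem~\ref{thm: a+b C^k}; this is the same argument rearranged.)
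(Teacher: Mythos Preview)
Your argument follows the paper's route: the essential step is that real analyticity of the boundary away from the countable exceptional sets forces the curvature zeros to be isolated on each analytic arc, so $Z_A$ and $Z_B$ are countable; the paper then simply says ``Theorem~\ref{thm: countable zero set} applies.'' Your version is more careful in two respects. First, you enlarge $Z_A$ to $\hat Z_A = Z_A \cup \nu_A(N_A)$ so as to also rotate away the normals at non-analytic boundary points. Second, you apply Theorem~\ref{thm: a+b C^k} pointwise rather than quoting Theorem~\ref{thm: countable zero set} directly --- a necessary refinement, since that theorem's hypothesis requires global $C^{k,\a}$ regularity of the summands, which fails here for $k=\o$.

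This extra care leads you to the honest conclusion that for $R\in R_{A,B}$ the body $R(A)+B$ is $C^\o$ only away from a countable subset of its boundary, with full $C^\o$ regularity guaranteed only when $N_A=N_B=\emptyset$. That caveat is genuine and not an artifact of your method: if $a_0\in\d A$ is a point where $A$ fails to be $C^\o$ but $G_A(a_0)\neq 0$, and $B$ is a round disk, then Theorem~\ref{thm: a+b C^k}(iii) applied with $B$ in the role of ``$A$'' shows that $R(A)+B$ fails to be $C^\o$ at the corresponding boundary point for \emph{every} $R\in SO(2)$. So the corollary as literally stated is slightly too strong; your analysis captures exactly what the argument actually yields, and the paper's terse invocation of Theorem~\ref{thm: countable zero set} glosses over this point.
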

\begin{proof}[Proof of Corollary~\ref{cor: anal}]
Let $D\in\{A, B\}$. The complement of a countable subset of $\d D$
is a real analytic curve and (like any metrizable manifold)
it has at most countably many components. Fix one such component $L$ and 
let $x\in L$. Then near $x$ the curve $L$ can be written
as the graph of a real analytic function $f$ over the tangent line at $x$
so that $f^{\prime\prime}$ is also real analytic, and hence
its zeros are isolated.
In these coordinates the curvature of $\d D$ equals
$\frac
{|f^{\prime\prime}|}
{(1+(f^\prime)^2)^{3/2}}$, so the zeros
of the curvature are also isolated. Since $L$ is the union of countably many
compact sets, the curvature of $L$ has at most countably many zeros, 
so Theorem~\ref{thm: countable zero set} applies. 
\end{proof}

The main result of this paper is that the analog of Corollary~\ref{cor: anal} for $C^\infty$ plane
convex bodies fails:

\begin{thm}
\label{intro: main}
For any $\a\in (0,1)$ there are $C^\infty$ strictly convex plane convex bodies $A$, $B$ such that 
$R(A)+B$ is not $C^{4,\a}$ for any generic $R\in SO(2)$.
\end{thm}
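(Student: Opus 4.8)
The plan is to build $A$ and $B$ so that their zero–curvature spherical images $Z_{_{\!A}}$, $Z_{_{\!B}}$ are "thick" enough that they cannot be rotated apart; in fact, so that for a comeager set of $R$ the intersection $R(Z_{_{\!A}})\cap Z_{_{\!B}}$ contains a point $x$ at which both bodies are exactly $C^{4,\a}$-bad, and then invoke the "only if" direction of Theorem~\ref{thm: a+b C^k}(iii) to conclude $R(A)+B$ is not $C^{4,\a}$ near the corresponding boundary point. Since we are in the plane, the Gauss map is a map $\d C\to S^1$ and $Z_{_{\!C}}$ is just the image under this map of the set where the curvature vanishes; parametrizing $S^1$ by angle, a rotation acts on $Z_{_{\!C}}\subset S^1$ by translation. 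So the combinatorial heart of the matter is: construct two closed subsets $Z_A, Z_B\subset S^1$, each realized as the zero-curvature image of a $C^\infty$ strictly convex body, such that $\{\theta : (Z_A+\theta)\cap Z_B=\emptyset\}$ is meager in $S^1$ — equivalently, $\bigcup_{x\in Z_A} (Z_B - x)$ is comeager. The natural candidates are (scaled, positioned) Cantor-type sets, which explains the paper's keyword "sums of Cantor sets": one arranges that the difference set $Z_B - Z_A$ (equivalently a sum of two Cantor sets) contains an interval, or more than that, is comeager, while each $Z_C$ itself is nowhere dense of measure zero so that the bodies can still be taken strictly convex and $C^\infty$.

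\textbf{Construction of the summands.} First I would fix the target local model: choose an explicit $C^\infty$ but not $C^{4,\a}$ convex arc — e.g.\ a graph $y = f(x)$ with $f$ convex, $f(0)=f'(0)=0$, $f''(x) \ge 0$, $f''(0)=0$, and with $f''$ tuned (via a lacunary series or an infinitely-flat bump construction) so that $f$ is $C^\infty$ yet the curvature's vanishing at $x=0$ destroys $C^{4,\a}$ regularity of any convex body having this arc. Then I would build $A$ and $B$ as $C^\infty$ strictly convex bodies whose boundaries contain, near a Cantor set of normal directions, infinitely many rescaled copies of this bad model, glued smoothly along an otherwise positively-curved boundary. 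Concretely: pick a convex, strictly convex, $C^\infty$ body; excise a Cantor set's worth of tiny boundary arcs at a prescribed (nowhere dense, measure-zero) set of normal angles $Z_A\subset S^1$, and replace each by a scaled bad arc, doing this so carefully that the result is still globally $C^\infty$ (the standard trick: the arcs being glued in are themselves $C^\infty$ and, after rescaling, flat to infinite order at their endpoints relative to the ambient curve). Do the analogous construction for $B$ with a set $Z_B$. The freedom in choosing $Z_A, Z_B$ as Cantor sets with prescribed ratios of dissection is exactly what we exploit next.

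\textbf{The arithmetic of the Cantor sets.} The key step, and the one I expect to be the main obstacle, is choosing $Z_A$ and $Z_B$ so that $\{\theta\in S^1 : (Z_A + \theta)\cap Z_B \ne\emptyset\}$ is comeager. Here I would use a result of the Newhouse "thickness" flavor, or an explicit self-similar construction: if $Z_A, Z_B$ are Cantor sets built with dissection so that the complementary gaps shrink fast relative to the pieces (thickness product $\ge 1$), then any small translate of $Z_A$ meets $Z_B$; more to the point, $Z_B - Z_A$ contains an interval. To get a \emph{comeager} (not merely full-measure or interval-containing) translate set one wants, for each $\theta$ in a dense $G_\delta$, an actual intersection point — this follows if $Z_B - Z_A$ is comeager, and one can force this by choosing $Z_A$ to have full measure's worth of "thick scales" densely: a routine Baire-category argument then shows the set of good $\theta$ contains a countable intersection of open dense sets. (One must also handle the global-rotation issue: in the plane a single rotation $R$ shifts \emph{all} normal angles by the same $\theta$, so working with the angle parametrization is exactly right and there is no further compatibility condition.)

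\textbf{Assembling the theorem.} Finally, for any $\theta$ in this comeager set pick $x\in (Z_A+\theta)\cap Z_B$; let $a\in \d(R_\theta A)$, $b\in\d B$ be the boundary points with outer normal $x$, and $a+b\in\d(R_\theta A + B)$ the corresponding point. By construction both bodies are $C^\infty$ there, so Theorem~\ref{thm: a+b C^k}(i)–(ii) give $C^2$ regularity and nonvanishing curvature is equivalent on both sides; but since $b$ was placed at the vanishing-curvature end of a bad arc of $B$ — so $B$ is not $C^{4,\a}$ near $b$ — and $A$ is $C^2$ with (we also arrange) $G_{_{\!A}}(a)\ne 0$, the "only if" part of Theorem~\ref{thm: a+b C^k}(iii) yields that $R_\theta A + B$ is not $C^{4,\a}$ near $a+b$. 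Hence $R(A)+B$ fails to be $C^{4,\a}$ for a comeager set of $R\in SO(2)$, which is the assertion. The one subtlety worth double-checking is that at the chosen intersection point the curvature of $A$ can be kept positive (i.e.\ $x\notin Z_A$ as a \emph{zero-of-curvature for A}?\,—\,no: $x\in Z_A+\theta$, so $x$ \emph{is} a zero-curvature direction of $R_\theta A$): so in fact we must instead arrange $x$ to be a bad point for \emph{one} body and merely a $C^2$, positive-curvature point for the other. This forces the construction to use two \emph{different} Cantor sets $Z_A, Z_B$ with a \emph{point of $Z_B$} (bad for $B$) hit by a \emph{positive-curvature direction of $R_\theta A$} — i.e.\ by a point \emph{not} in $Z_A+\theta$. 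So the correct target is: $\{\theta : (S^1\setminus(Z_A+\theta))\cap (\text{bad set of }B)\ne\emptyset \text{ at a point where we can apply Thm~\ref{thm: a+b C^k}}\}$ is comeager — which, since $Z_A$ is nowhere dense, is automatic once the bad set of $B$ is nonempty, \emph{provided} we can also guarantee $A$ is $C^\infty$ (not merely $C^2$) there, i.e.\ that direction is not one of $A$'s glued-in bad arcs either. Reconciling these constraints — $B$ bad, $A$ good-and-smooth, at a comeager set of relative positions — is the genuine content, and it is handled by making $Z_A$'s bad directions a \emph{sparse} Cantor set while $B$'s bad directions form a set whose translates are comeager-hit by the complement of $Z_A$; concretely one can even take $B$'s bad set to be a single point and note its orbit under rotation is all of $S^1$, reducing the problem to the already-established fact that $Z_A$ nowhere dense makes $\{\theta : \text{(that point)} \notin Z_A+\theta\}$ comeager, after which Theorem~\ref{thm: a+b C^k} finishes it.
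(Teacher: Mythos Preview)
Your proposal contains a fundamental misconception that undermines the entire argument. You write of choosing ``an explicit $C^\infty$ but not $C^{4,\a}$ convex arc'' and later assert that at a chosen point ``$B$ is not $C^{4,\a}$ near $b$.'' But $C^\infty$ implies $C^{4,\a}$: if $B$ is a $C^\infty$ convex body, as the theorem requires, then $B$ is $C^{4,\a}$ everywhere. There is no single-body local model that is smooth yet not $C^{4,\a}$. Consequently your final simplified plan --- take $B$ with a single bad point, rotate $A$ so its zero-curvature set misses that point, invoke Theorem~\ref{thm: a+b C^k}(iii) --- would prove the opposite of what you want: at any point where $G_{_{\!A}}(a)\neq 0$ and $B$ is $C^\infty$, Theorem~\ref{thm: a+b C^k}(iii) gives that $A+B$ \emph{is} $C^\infty$ (hence $C^{4,\a}$) there.

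The regularity failure of the Minkowski sum arises only from the \emph{interaction} of $A$ and $B$ at directions where \emph{both} curvatures vanish --- precisely where Theorem~\ref{thm: a+b C^k} is silent. This is Boman's contribution: there exist specific $f,g\in C^\infty(\R)$, strictly convex with $f^{\prime\prime}(0)=g^{\prime\prime}(0)=0$, such that the infimal convolution $f\Box g$ is not $C^{4,\a}$. The paper builds $A$ and $B$ so that near a Cantor set of normal directions their boundaries are locally congruent to the graphs of Boman's $f$ and $g$ respectively (this requires the delicate ``pliable'' machinery of Sections~\ref{sec: rotated graphs}--\ref{sec: building a curve} to keep all $C^r$ norms controlled as the pieces shrink). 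For a countable dense set of rotations $R$ --- those aligning left endpoints of the two Cantor constructions --- a portion of $\d(R(A)+B)$ is then literally the graph of $f\Box g$, hence not $C^{4,\a}$. To pass from this countable dense set to a comeager one, the paper proves a stability statement: for rotations $\de$-close to such an $R$, the local $C^{4,\a}$ norm of the sum stays above any prescribed threshold, because $h_\de=f_\de\Box g$ depends continuously on $\de$ away from the origin. Intersecting these open neighborhoods over all thresholds yields a dense $G_\de$ set of bad rotations. Your Cantor-set and Newhouse-thickness intuition is indeed relevant to one ingredient (the paper does arrange $Z_{_{\!A}}=Z_{_{\!B}}$ and uses a sum-of-Cantor-sets result to ensure $R(Z_{_{\!A}})\cap Z_{_{\!B}}\neq\emptyset$ for every $R$), but mere intersection of the zero-curvature sets does not by itself force non-smoothness of the sum; you are missing both the Boman local model and the perturbation argument.
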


There is substantial flexibility in the construction of $A$, $B$ in Theorem~\ref{intro: main}
and in particular, for any positive integer $m$ 
we can arrange that $B=A$ where $A$ is invariant under the rotation by the angle 
$\frac{2\pi}{m}$. Thus we get an order $m$ subgroup of $SO(2)$
such that $R(A)+B=2A$ is $C^\infty$ for any $R$ in the subgroup. 
This explains why our method of proving Theorem~\ref{intro: main} omits certain rotations, 
and calls for the following question:

\begin{quest}
Do there exist $C^\infty$ convex bodies $A$, $B$ in $\R^n$ such that $R(A)+B$ is not $C^\infty$
for every $R\in SO(n)$?
\end{quest}

Let us sketch the proof of Theorem~\ref{intro: main}. We make extensive use
of the work of J.~Boman in~\cite{Bom-sum} who gave a proof for $R=1$ by constructing two
$C^\infty$ strictly convex functions $f$, $g$ on $[0,\infty)$
such that the Minkowski sum of their epigraphs $\{(x,y)\in \R^2: f(x)\le y\}$,
$\{(x,y)\in \R^2: g(x)\le y\}$ is not $C^{4,\a}$ at $0$, where all derivatives
of $f$, $g$ vanish. The Minkowski sum is the epigraph of the infimal convolution $f\Box g$,
and the $C^{4,\a}$ norm $f\Box g$ blows up near $0$.

By the above discussion
a necessary condition is that $Z_{_{\!A}}$, $Z_{_{\!B}}$ are 
uncountable closed subsets of $S^1$ that
cannot be rotated off each other. Moreover, 
Sard's theorem implies that $Z_{_{\!A}}$, $Z_{_{\!B}}$ have measure zero,
and in particular they are totally disconnected. Recall that any 
compact totally disconnected space without isolated points is homeomorphic to
a Cantor set. 
Pulling these sets to the fundamental domain $[0, 2\pi]$ in the universal cover of $S^1$
we need to know when the difference of two totally disconnected subsets 
of $[0, 2\pi]$ equals $[0,2\pi]$ mod $2\pi$. Differences of Cantor-like sets were studied,
and there are some sufficient conditions ensuring that the difference fill the whole interval.
Of course, the mere fact that $Z_{_{\!A}}$, $Z_{_{\!B}}$ 
cannot be rotated off each other is not enough for non-smoothness.

Strict convexity implies that the Gauss maps are homeomorphisms, hence we wish to
build a strictly convex curve whose curvature vanishes on a Cantor-like set.
The idea is to cook up a strictly convex function $\hat f$ on $[0,\tau]$ 
such that near the endpoints $\hat f$ equals $f$ and $f\circ l$
where $l(x)=\tau-x$. The construction should be such that we 
control all derivatives of $\hat f$ as $\tau\to 0$, which is surprisingly delicate.  
Then we piece together the graphs of such functions $\hat f$ for various $\tau$ in a Cantor
set like pattern, and with the above derivative control this curve becomes
the boundary of a $C^\infty$ strictly convex set $A$. Similarly, we use $g$
to construct a convex set $B$. 
 
There are two kinds of points in a Cantor-like set: the endpoints of removed open intervals
and the limits of the endpoints. The set of endpoints is countable. 
If the rotation angle is the difference of two (say left) endpoints, it superimposes
graphs of $\hat f$ and $\hat g$, and we are done by Boman's work.
Such rotation angles form a countable dense subset of $S^1$.
If the rotation angle is not in countable dense subset, we have to investigate how the 
$C^{4,\a}$ norm of $f\Box g$ changes under small rotation of the graph of $f$.
As Boman shows in~\cite{Bom-sum} the infimal convolution $f\Box g$ does not depend continuously of $f$, $g$
in the $C^4$ topology (let alone in any stronger topology). Nevertheless our deformation
of $f$ is very specific and we prove continuity under such deformations, and again reduce
to Boman's computations to show that nearby rotations lead to Minkowski sums with large $C^{4,\a}$.
This implies that there is a dense $G_\de$ set of rotations such that the corresponding
Minkowski sums are not $C^{4,\a}$.

{\bf Structure of the paper:} 
In Section~\ref{sec: inf convolution} we prove 
Theorem~\ref{thm: a+b C^k}.
Sections~\ref{sec: rotated graphs}--\ref{sec: smoothing a hinge}
contain technical results needed to establish Lemma~\ref{lem: smooth hinge}, which is
used in Section~\ref{sec: building a curve} to construct $A$, $B$ appearing in Theorem~\ref{intro: main},
which is proved in Section~\ref{sec: proof of main thm}.
Another technical ingredient is a proof of~\cite[Lemma 2]{Bom-sum} given in the appendix
for completeness.

\section{On smoothness of infimal convolution}
\label{sec: inf convolution}

The purpose of this section is to provide a reference for 
basic smoothness properties of the infimal convolution, which do not seem to appear
in the literature. The previous work was reviewed in the introduction and Remark~\ref{rmk: hist on smoothness}.

Let $f$, $g$ be real-valued convex functions on $\R^n$ 
that are bounded below. 
Their {\em infimal convolution\,} $h=f\Box g$ is given by 
\begin{equation}
\label{form: defn inf conv}
\displaystyle{h(x)=\inf_{y}(f(y)+g(x-y))=\inf_{x_1+x_2=x}(f(x_1)+g(x_2))}.
\end{equation}
Note that $h$ is real-valued and bounded below.
The right hand side of (\ref{form: defn inf conv}) gives $f\Box g=g\Box f$.
The epigraph of $h$ is the Minkowski sum of epigraphs of $f$ and $g$, 
see e.g.~\cite[p.39]{Sch-book}, hence $h$ is convex.

We will write $D>0$ to mean that the matrix $D$ is positive definite.

\begin{lem} 
\label{lem: min, IFT}
Let $f$, $g$ be convex functions from $\R^n$ to $[0,\infty)$
such that $f$, $g$ are $C^2$ near $0$ and $f(0)=0=g(0)$. 
Set  $h=f\Box g$ and $\s_x(y)=f(y)+g(x-y)$. Suppose that 
$\,\mathrm{Hess}\, \s_0\vert_{_{y=0}}>0$. Then \vspace{-2pt}
\begin{itemize}
\item[\textup(a)]
There is a neighborhood $U$ of $0$ in $\R^n$
and a $C^1$ map $\mu\co U\to\R^n$ such that $\mu(0)=0$
and $\s_x(y)>\s_x(\mu(x))$ for each $x\in U$ and every $y\neq \mu(x)$. \vspace{3pt}
\item[\textup(b)]
If in addition $f$, $g$ are $C^{k,\a}$,
then $\mu$ is $C^{k-1,\a}$ and $h$ is $C^{k,\a}$.\vspace{3pt}
\item[\textup(c)]
$h$ is $C^2$ and satisfies the formulas \textup{(\ref{form: h via y(x)}), (\ref{form: hess with mu})} below.\vspace{3pt}
\item[\textup(d)]
If $\mathrm{Hess}\, h\vert_{_{x=0}}>0$, then 
$\mathrm{Hess}\, f$, $\mathrm{Hess}\, g$ are positive definite near $0$, and
the matrices $J_\mu$, $I-J_\mu$ defined after the formula \textup{(\ref{form: h via y(x)})} below 
are non-singular. 
\end{itemize}
\end{lem}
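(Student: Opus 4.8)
The plan is to treat the four statements as a single chain, each feeding into the next, with the Implicit Function Theorem as the engine. The function $\sigma_x(y)=f(y)+g(x-y)$ is $C^2$ near $(0,0)$ by hypothesis, and since $f,g$ are convex with minimum $0$ attained at $0$, the point $y=0$ is a global minimum of $\sigma_0$. The first-order condition is $\nabla_y\sigma_x(y)=\nabla f(y)-\nabla g(x-y)=0$, and the $y$-derivative of this at $(0,0)$ is exactly $\operatorname{Hess}\sigma_0\vert_{y=0}=\operatorname{Hess}f(0)+\operatorname{Hess}g(0)$, which is positive definite by assumption, hence invertible. So IFT gives a $C^1$ map $\mu$ on a neighborhood $U$ of $0$ with $\mu(0)=0$ solving $\nabla_y\sigma_x(\mu(x))=0$. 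For part (a) I would then argue that $\mu(x)$ is not merely a critical point but the unique global minimizer of the convex function $y\mapsto\sigma_x(y)$: convexity of $f$ and $g$ makes $\sigma_x$ convex in $y$, strict positivity of $\operatorname{Hess}\sigma_0$ near $0$ makes it strictly convex there, and properness (boundedness below plus the behavior at infinity coming from convexity) forces the infimum in \eqref{form: defn inf conv} to be attained; shrinking $U$ ensures the attained minimizer stays in the region of strict convexity, so it is unique and equals $\mu(x)$. This also identifies $h(x)=\sigma_x(\mu(x))$, which is the content needed downstream.

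For part (b) I would bootstrap regularity: if $f,g\in C^{k,\alpha}$, then $\nabla_y\sigma$ is $C^{k-1,\alpha}$ jointly, so the $C^{k,\alpha}$ version of the Implicit Function Theorem upgrades $\mu$ to $C^{k-1,\alpha}$; then $h(x)=f(\mu(x))+g(x-\mu(x))$ is a composition of $C^{k,\alpha}$ functions with $C^{k-1,\alpha}$ functions—but here one uses the crucial cancellation from the critical point equation to gain a derivative, namely $\nabla h(x)=\nabla g(x-\mu(x))$ (equivalently $\nabla f(\mu(x))$), because the $\mu$-dependent terms cancel via $\nabla_y\sigma_x(\mu(x))=0$. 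Since $\nabla g$ is $C^{k-1,\alpha}$ and $x-\mu(x)$ is $C^{k-1,\alpha}$, the composition $\nabla h$ is $C^{k-1,\alpha}$, i.e. $h\in C^{k,\alpha}$. Part (c) is the special case $k=2,\alpha=0$ together with recording the formulas: differentiating $\nabla h(x)=\nabla f(\mu(x))$ gives the Hessian formula \eqref{form: hess with mu} in terms of $J_\mu$ (the Jacobian of $\mu$), and differentiating the critical-point equation $\nabla f(\mu(x))=\nabla g(x-\mu(x))$ gives $\operatorname{Hess}f(\mu)\,J_\mu=\operatorname{Hess}g(x-\mu)\,(I-J_\mu)$, which is \eqref{form: h via y(x)} and pins down $J_\mu$.

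For part (d), assume $\operatorname{Hess}h\vert_{x=0}>0$. From the Hessian formula, $\operatorname{Hess}h(0)$ is expressible as $\operatorname{Hess}f(0)\,J_\mu(0)$ (and symmetrically as $\operatorname{Hess}g(0)\,(I-J_\mu(0))$); since the left side is invertible, both $\operatorname{Hess}f(0)\,J_\mu(0)$ and $\operatorname{Hess}g(0)\,(I-J_\mu(0))$ are invertible, forcing $J_\mu(0)$ and $I-J_\mu(0)$ to be non-singular and likewise $\operatorname{Hess}f(0)$, $\operatorname{Hess}g(0)$. Positive definiteness—not just invertibility—of $\operatorname{Hess}f(0)$ and $\operatorname{Hess}g(0)$ follows because they are already positive semidefinite (convexity) and now non-singular; a continuity/eigenvalue argument then propagates positive definiteness to a neighborhood of $0$. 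The main obstacle I anticipate is part (b): getting the sharp statement that $h$ is a full derivative smoother than the naive composition would suggest. The temptation is to say $h=f\circ\mu+g\circ(\mathrm{id}-\mu)$ is only $C^{k-1,\alpha}$; the fix is to never differentiate $\mu$ inside $h$ at all, but instead differentiate the envelope identity once, observe the first-order terms vanish by the envelope theorem, and only then compose—this is the one place where a routine-looking computation hides the real content, and it must be stated carefully (e.g. by writing $\partial_i h(x)=\sum_j \partial_j f(\mu(x))\,\partial_i\mu_j(x)+\sum_j\partial_j g(x-\mu(x))(\delta_{ij}-\partial_i\mu_j(x))$ and collecting the $\partial_i\mu_j$ coefficient as $\partial_j f(\mu)-\partial_j g(x-\mu)=0$).
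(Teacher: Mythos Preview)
Your approach matches the paper's: IFT on $\nabla f(y)=\nabla g(x-y)$ to produce $\mu$, the envelope cancellation $\nabla h=\nabla f\circ\mu=\nabla g\circ(\mathrm{id}-\mu)$ to recover the extra derivative in (b), and differentiation of that identity for (c) and (d). The one step to tighten is in (a): ``bounded below plus convexity'' does not by itself give properness (think $y\mapsto e^{-y}$), so the infimum of $\sigma_x$ need not be attained a priori; the paper fills this with a short barrier argument---choose a ball $B$ on which $\operatorname{Hess}\sigma_0>0$, note $\sigma_x\vert_{\partial B}>r_B>g(x)=\sigma_x(0)$ for $x$ small, and use convexity of $\sigma_x$ to conclude $\sigma_x>r_B$ outside $B$, which traps the (then unique) minimizer inside $B$ where it must agree with the IFT solution.
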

\begin{proof}
Since $f$ and $g$ are convex, so is $\s_x$
(because convexity is inherited under affine
change of variable, and under addition). 
Fix any closed round ball $B$ about $0$ in $\R^n$
such that $\mathrm{Hess}\, \s_0\vert_{_B}>0$. Then
$\mathrm{Hess}\,\s_x\vert_{_B}>0$ for all $x$ near $0$.
Thus $\s_x\vert_{_B}$ is strictly convex, and hence has a unique minimum,
which we denote $\mu_{_B}(x)$.
The assumptions imply that $0$ is a unique minimum of $\s_0$,
hence there is a constant $r_{_B}>0$ such that $\s_x\vert_{_{\d B}}>r_{_B}$
for all $x$ near $0$.
Since $\s_x(0)=g(x)$ tends to $0$ as $x\to 0$,
we can assume that $x$ is so small that
$\s_x(0)<r_{_B}$. Convexity of $\s_x$ ensures that $\s_x> r_{_B}$ outside $B$.
(Otherwise, given $y\notin B$ with $\s_x(y)\le r_{_B}$ 
we could find $t\in [0,1]$ with $ty\in\d B$
which contradicts $\s_x(ty+(1-t)0)\le t\s_x(y)+(1-t)\s_x(0)<r_{_B}$.)
Hence $\mu_{_B}(x)$ is a unique minimum of $\s_x$ for every $x$ near $0$. 
In particular, since $0$ is a minimum of $\s_0$ we get
$\mu_{_B}(0)=0$.

Critical points of $\s_x$ are solutions of
\begin{equation}
\label{form: crit point}
0=\nabla \s_x\vert_{y}=\nabla f\vert_{y}-\nabla g\vert_{x-y}.
\end{equation} 
Since $0$ is a minimum of $f$, $g$, the right hand side of (\ref{form: crit point}) vanishes when $x=0=y$, and its
Jacobian with respect to $y$ equals $\mathrm{Hess} f\vert_{y}+\mathrm{Hess}\, g\vert_{x-y}$ which
is positive definite if $y=0$. By the Implicit Function Theorem
(\ref{form: crit point}) has a unique $C^1$ solution $y=\mu(x)$ satisfying $\mu(0)=0$
and defined near $0$; moreover, if $f$, $g$ are $C^{k,\a}$, then $\mu$ is $C^{k-1,\a}$.
(The $C^{k,\a}$ version of the Implicit Function Theorem
follows in the usual way from the  $C^{k,\a}$ version of the Inverse
Function Theorem proved in~\cite[Section 2.2]{BHS}). 
By uniqueness $\mu=\mu_{_B}$ whenever both sides are defined, which is a neighborhood of $0$.
This completes the proof of (a).

For $x\in U$ we have  $h(x)=f(\mu(x)) +g(x-\mu(x))$ and hence
\begin{equation}
\label{form: h via y(x)}
\mbox{\small$
\nabla h\vert_x=\nabla f\vert_{\mu(x)}\cdot J_\mu\vert_x+\nabla g\vert_{x-\mu(x)}\cdot (I-J_\mu)\vert_x=\nabla g\vert_{x-\mu(x)}=
\nabla f\vert_{\mu(x)},
$}
\end{equation}
where we write gradients as row vectors, 
$\cdot$ is the matrix multiplication, $I$ is the identity map,
$J_\mu$ is the Jacobian matrix of $\mu$,
and the last two equalities follow from (\ref{form: crit point}).

If in addition $f$, $g$ are $C^{k,\a}$ near $0$, then
$\nabla f$ and $\mu$ are $C^{k-1, \a}$, and hence so is $\nabla h=\nabla f\circ \mu$, as the composite 
of $C^{k-1, \a}$ maps is $C^{k-1, \a}$, see~\cite[Section 2.2]{BHS}; thus $h$ is $C^{k, \a}$.

Since $f$, $g$ are assumed $C^2$, we conclude that $h$ is $C^2$.
Differentiating (\ref{form: h via y(x)}) gives 
\begin{equation}
\label{form: hess with mu}
\mathrm{Hess}\, h\vert_x=\mathrm{Hess}\, g\vert_{x-\mu(x)}\cdot (I-J_{\mu})\vert_x=\mathrm{Hess}\, f\vert_{\mu(x)}\cdot J_\mu\vert_x
\end{equation} 
In particular, if $\mathrm{Hess}\, h\vert_{_{x=0}}>0$, then near $0$
the matrices $J_\mu$, $I-J_\mu$ are non-singular and $\mathrm{Hess}\, f$, $\mathrm{Hess}\, g$, $\mathrm{Hess}\, h$ are positive definite. 
\end{proof}

\begin{rmk}
The assumption $\,\mathrm{Hess}\, \s_0\vert_{_{y=0}}>0$ of Lemma~\ref{lem: min, IFT} holds if 
$\Hess f\vert_{_{y=0}}>0$ or $\Hess\, g\vert_{_{y=0}}>0$.
\end{rmk}

\begin{lem}
\label{lem: inf conv equiv}
Let $f$, $g$, $h$, $\mu$, $U$ be as in Lemma~\ref{lem: min, IFT}. 
If $\,\mathrm{Hess}\, f\vert_{_{x=0}}>0$, then over $U$ the matrix
$I-J_\mu$ is non-singular, and the following equivalences hold: 
\[
\mathrm{Hess}\, h>0\ \Leftrightarrow\ \mathrm{Hess}\, g>0 \ \Leftrightarrow\ J_\mu\ \text{is\ non-singular},
\]
If in addition $f$, $h$ are $C^{k,\a}$ near $0$, then so is $g$.
\end{lem}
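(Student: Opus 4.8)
The plan is to run everything off the two identities from Lemma~\ref{lem: min, IFT}: the critical point equation (\ref{form: crit point}), which along the graph of $\mu$ reads $\nabla f\vert_{\mu(x)}=\nabla g\vert_{x-\mu(x)}$, and the Hessian formula (\ref{form: hess with mu}), $\mathrm{Hess}\,h\vert_x=\mathrm{Hess}\,g\vert_{x-\mu(x)}\cdot(I-J_\mu)\vert_x=\mathrm{Hess}\,f\vert_{\mu(x)}\cdot J_\mu\vert_x$, together with convexity of $g$ and of $h$ (so $\mathrm{Hess}\,g\ge 0$ and $\mathrm{Hess}\,h\ge 0$ near $0$, the latter because $h$ is convex and $C^2$ by Lemma~\ref{lem: min, IFT}(c)). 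Since $f$ is $C^2$ with $\mathrm{Hess}\,f\vert_{x=0}>0$, after shrinking $U$ I may assume $F_x:=\mathrm{Hess}\,f\vert_{\mu(x)}>0$ for every $x\in U$; writing $G_x:=\mathrm{Hess}\,g\vert_{x-\mu(x)}\ge 0$ it then follows that $F_x+G_x>0$, in particular non-singular, throughout $U$.

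First I would establish the basic algebraic relation. Equating the two expressions for $\mathrm{Hess}\,h\vert_x$ in (\ref{form: hess with mu}) (equivalently, differentiating (\ref{form: crit point}) in $x$) gives $F_x J_\mu\vert_x=G_x(I-J_\mu\vert_x)$, hence $(F_x+G_x)J_\mu\vert_x=G_x$, so
\[
J_\mu\vert_x=(F_x+G_x)^{-1}G_x,\qquad (I-J_\mu)\vert_x=(F_x+G_x)^{-1}F_x .
\]
Since $F_x$ and $F_x+G_x$ are non-singular, so is $(I-J_\mu)\vert_x$, which is the first assertion. For the equivalences I argue pointwise on $U$: from $\mathrm{Hess}\,h\vert_x=F_x\,J_\mu\vert_x$ with $F_x$ non-singular, $\mathrm{Hess}\,h\vert_x$ is non-singular iff $J_\mu\vert_x$ is, and since $\mathrm{Hess}\,h\vert_x$ is symmetric positive semidefinite this is the same as $\mathrm{Hess}\,h\vert_x>0$; from $J_\mu\vert_x=(F_x+G_x)^{-1}G_x$ with $F_x+G_x$ non-singular, $J_\mu\vert_x$ is non-singular iff $G_x$ is, and since $G_x$ is symmetric positive semidefinite this is the same as $\mathrm{Hess}\,g\vert_{x-\mu(x)}>0$. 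Chaining these gives $\mathrm{Hess}\,h>0\Leftrightarrow\mathrm{Hess}\,g>0\Leftrightarrow J_\mu$ non-singular.

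For the regularity assertion the guiding principle is to work with gradients so as not to lose a derivative. Assume $f$, $h$ are $C^{k,\a}$ near $0$. Then $\nabla f$ is $C^{k-1,\a}$ with non-singular Jacobian $\mathrm{Hess}\,f$ near $0$, so by the $C^{k-1,\a}$ inverse function theorem of~\cite[Section 2.2]{BHS} it is a $C^{k-1,\a}$ diffeomorphism of a neighborhood of $0$ onto a neighborhood of $\nabla f\vert_0$; combined with $\nabla h\vert_x=\nabla f\vert_{\mu(x)}$ from (\ref{form: h via y(x)}) this gives $\mu=(\nabla f)^{-1}\circ\nabla h$ near $0$, whence $\mu$, and therefore $\psi(x):=x-\mu(x)$, is $C^{k-1,\a}$. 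Since $D\psi=I-J_\mu$ is non-singular near $0$ by the first assertion and $\psi(0)=0$, $\psi$ is itself a $C^{k-1,\a}$ diffeomorphism near $0$, and then $\nabla g\vert_{x-\mu(x)}=\nabla h\vert_x$ from (\ref{form: h via y(x)}) reads $\nabla g=\nabla h\circ\psi^{-1}$ near $0$, a composite of $C^{k-1,\a}$ maps~\cite[Section 2.2]{BHS} and hence $C^{k-1,\a}$; therefore $g$ is $C^{k,\a}$ near $0$.

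The linear-algebra manipulations are routine; the step that needs care is the last one. The naive move is to write $g(x-\mu(x))=h(x)-f(\mu(x))$ directly, but $f\circ\mu$ is only $C^{k-1,\a}$ (one derivative is lost through $\mu$), so this yields merely $g\in C^{k-1,\a}$; passing instead to the gradient identity $\nabla g=\nabla h\circ\psi^{-1}$ and using that both factors are $C^{k-1,\a}$ is what recovers the sharp conclusion. One should also remember to shrink $U$ at the outset so that $\mathrm{Hess}\,f\vert_{\mu(x)}>0$ on all of $U$ — this is precisely what makes $F_x+G_x$ invertible and legitimizes the inversions above.
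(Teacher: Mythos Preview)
Your proof is correct and follows essentially the same route as the paper: both invert $\nabla f$ to obtain $\mu=(\nabla f)^{-1}\circ\nabla h$, establish that $I-J_\mu$ is non-singular, invert $x\mapsto x-\mu(x)$, and then read off $\nabla g$ as a composite of $C^{k-1,\alpha}$ maps via (\ref{form: h via y(x)}). Your explicit formulas $J_\mu=(F_x+G_x)^{-1}G_x$ and $I-J_\mu=(F_x+G_x)^{-1}F_x$ are a clean alternative to the paper's kernel argument for the non-singularity of $I-J_\mu$, but the underlying idea is the same.
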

\begin{proof}
The map $\nabla f$ is a diffeomorphism near $0$ as $\Hess f>0$. Hence (\ref{form: h via y(x)}) gives
$\mu=(\nabla f)^{-1}\circ\nabla h$. Thus if $f$, $h$ are $C^{k,\a}$ near $0$, then
$\nabla f$ and $\mu$ are $C^{k-1, \a}$, and hence so is $\nabla g=\nabla f\circ \mu\circ (I-\mu)^{-1}$,
i.e., $g$ is $C^{k, \a}$. 

To show that $I-J_\mu$ is non-singular suppose $J_\mu(v)=v$ and use (\ref{form: hess with mu}) to compute \[
0=\mathrm{Hess}\, f\vert_{_{\mu(x)}} J_{\mu}\vert_{_x} v=\mathrm{Hess}\, f\vert_{_{\mu(x)}}  v,\] 
and so $v=0$ as $\mathrm{Hess}\, f>0$. Thus $I-\mu$ is a diffeomorphism near $0$,
and (\ref{form: hess with mu}) yields the desired equivalences.
\end{proof}

\begin{rmk}
\label{rmk: f non-smooth while g, h are}
If $f$, $g$, $h$, $\mu$, $U$ be as in Lemma~\ref{lem: min, IFT}, and in addition, $g$, $h$ 
are $C^{k,\a}$, then $f$ need not be $C^{k,\a}$. For example, if $g$ is everywhere zero,
then so is $h$, regardless of how smooth $f$ is, and we can arrange the graph of $f$
to have positive curvature everywhere.
%
%
\end{rmk}

\begin{ex}
\label{ex: sum of epigraphs}
Given a function $q\co\R\to\R$,
let $q_-$, $q_+$, $\epi q$ denote the restrictions of $q$ to $(-\infty, 0]$, $[0,\infty)$,
and the epigraph of $q$, respectively.
If $q$ is convex with minimum at $0$, then it is easy to see that
$\epi q=\epi q_- + \epi q_+$. Let $r\co\R\to\R$ be another  
convex function with minimum at $0$, and let $f(q,r)\co\R\to\R$ be the function
whose restrictions to $(-\infty, 0]$, $[0,\infty)$ are $q_-$, $r_+$, respectively.
Note that $f(q,r)$ is also convex with minimum at $0$. Thus
\begin{equation*}
\epi f(q,r)+\epi f(r,q)=\epi q_- + \epi r_+ + \epi r_- + \epi q_+=
\epi q + \epi r,
\end{equation*}
or equivalently, $f(q,r)\Box f(r,q)=q\Box r$. 
For every $k\ge 2$ it is easy to find examples such that 
$q$, $r$ are $C^{k}$ near $0$ and $q^{\prime\prime}>0$, so that
$q\Box r$ is $C^{k}$ by Lemma~\ref{lem: min, IFT},
but $f(q,r)$, $f(r,q)$ are not $C^{k}$ (even though they are always $C^1$).
Also if $q(x)=x^2$ and $r(x)=ax^2$ where $0<a\neq 1$ is a constant, then
$f(q,r)$, $f(r,q)$ are $C^{1,1}$, convex, and not $C^2$ at $0$.
This is a rare case when the infimal convolution
can be explicitly computed and $f(q,r)\Box g(r,q)(x)=q\Box r=\frac{a}{a+1}x^2$.
\end{ex}

\begin{proof}[Proof of Theorem~\ref{thm: a+b C^k}] 
The smoothness of $A+B$ at $a+b$ is preserved under translations, so
replacing $A$, $B$ by $A-a$, $B-b$ we can assume $a=0=b$.
Since $A$, $B$ are $C^1$, so is $A+B$, see~\cite[Theorem 2]{KraPar}. 
A basic property of the Minkowski sum is that $\nu_{_{\!A}}(0)=\nu_{_{\!B}}(0)=\nu_{_{\!A+B}}(0)$,
see~\cite[p.351]{KraPar}. 

Let $\tau$ be the tangent plane to $A+B$ at $0$ with the orthogonal
complement $\tau^\bot$, which we linearly identify with $\R$ by mapping  $-\nu_A(0)$ to $1$. 
Then near $0$ the bodies $A$, $B$, $A+B$ coincide with epigraphs
of certain convex functions $f$, $g$, $h=f\Box g$ from $\tau$ to $\tau^\bot$ whose infima
are attained at $0$ (and possibly at other points). 

One way to construct $f$
is to consider a cone $C(\e_f)$ with apex at $0$ consisting of vectors that make the angle $\e_f>0$
with $-\nu_{_{\!A}}(0)$, where $\e_f$ is so small that $C(\e_f)$ lies in $A$ near $0$,
and then let $f$ be the function whose epigraph equals $A+C(\e_f)$; one defines $C(\e_g)$ similarly,
and then lets $h$ be the the function with epigraph $A+B+C(\e_f)+C(\e_g)$.

The function $f$, $g$, $h$ is $C^{k,\a}$ near a point if and only if so is
$A$, $B$, $A+B$, respectively, and in particular, $f$, $g$ are $C^2$ near $0$.
The Gaussian curvature is intrinsic up to sign, hence non-vanishing of 
$G_{_{\!A}}$, $G_{_{\!B}}$, $G_{_{\!A+B}}$ at a point is equivalent non-vanishing 
of all principal curvatures of the graph of $f$, $g$, $h$, respectively. 
The principal curvatures of the graph of a convex function
(with respect to the outer normal) are nonnegative, and equal 
the eigenvalues of $\Hess\, f$, see e.g.~\cite[p.115 and Theorem 1.5.13]{Sch-book}. 
Thus $\Hess f>0$ at $0$, and
the claimed result follows from Lemmas~\ref{lem: min, IFT} and \ref{lem: inf conv equiv}.
\end{proof}

\begin{ex} 
\label{ex: flat face}
Under the assumptions of Theorem~\ref{thm: a+b C^k}
suppose $a=0=b$ and let $H$ be the common supporting hyperplane at $0$
to $A$, $B$, $A+B$. If $\d B=H$ near $0$, then as in Remark~\ref{rmk: f non-smooth while g, h are}
one sees that the boundary of $A+B$ equals $H$ near $0$ regardless of
what $A$ is. Then $B$ and $A+B$ are $C^\o$ near $0$, while
we can specify the smoothness of $A$ at $a$ arbitrarily, and in particular,
choose it to be no better than $C^{k,\a}$. Note however that
the low regularity of $A$ at $a$ might still affect the smoothness of 
$A+B$ on the boundary of the face $H\cap (A+B)$.
\end{ex}

\begin{rmk} 
\label{rmk: hist on smoothness}
Lemma~\ref{lem: min, IFT}(c) is mentioned in~\cite[p.226]{Bom-analy}, and a closely related result
is discussed in~\cite[p.349, Remark 1]{KraPar}.
In~\cite[Proposition 5.1]{Gho} one finds a version of Lemma~\ref{lem: min, IFT}(b)
which treats the case $\a=0$ under the assumptions that $G_{_{\!A}}(a)\neq 0$ and
$B$ is strictly convex at $b$ (the latter assumption does not appear
in the  statement of~\cite[Proposition 5.1]{Gho} but is used in the proof: 
without it the set $K$ on the first line of~\cite[p.1605]{Gho} is not always 
strictly convex). 
\end{rmk}

\section{A bound on the $C^r$ norm of rotated graphs}
\label{sec: rotated graphs}

In this section we discuss rotated graphs, which feature prominently 
in our constructions.
Let $J\subset\R$ be a compact interval.
If $f\in C^\infty(J)$ and $\phi\in\R$ is sufficiently close to $0$,
then the rotation $z\to ze^{i\phi}$ takes
the graph of $f$ to the graph of a $C^\infty$ function, 
which we denote $f_\phi$. Thus $e^{i\phi}(x+if(x))=y+if_\phi(y)$. 
The real and imaginary parts of $e^{i\phi}(x+if(x))$ are 
$R(x)=x\cos\phi-f(x)\sin\phi$ and $I(x)=x\sin\phi+f(x)\cos\phi$, respectively.
Thus $f_\phi\circ R=I$, the domain of $f_\phi$ is $R(J)$, and we compute
\begin{equation}
\label{form: 1st 2nd deriv rotation}
f_\phi^\prime\circ R=\frac{I^\prime}{R^\prime}=
\frac{\sin\phi+f^\prime\cos\phi}{\cos\phi-f^\prime\sin\phi}
\quad\quad\quad\quad
f_\phi^{\prime\prime}\circ R=
\frac{I^{\prime\prime} R^\prime-I^\prime R^{\prime\prime}}
{(R^\prime)^3}=
\frac{f^{\prime\prime}}{(R^\prime)^3} 
\end{equation}
where $R^\prime$ does not vanish (because then $I^\prime$ would have to vanish
at the same point and $R^\prime$, $I^\prime$ clearly have no common zeros).

Given $r\in\N$ consider the norm
on $C^\infty(J)$ given by $\displaystyle{\|f\|_r=\sum_{i=0}^r\max |f^{(i)}|}$, which
clearly satisfies $\|fg\|_r\le \|f\|_r\|g\|_r$. When we need to specify the interval 
over which the maxima are taken we write $\|f\|_{_{C^r(J)}}$ for $\|f\|_r$.

\begin{lem}
\label{lem: rotation} 
Let $f\in C^\infty(J)$ and let $U$ be a compact subset of $(-\pi/2, \pi/2)$ 
such that $f_\phi$ is defined  and 
$\|f\tan\hspace{-1pt}\phi\|_r<1$ for each $\phi\in U$.
Then $\|f_\phi\|_{_{C^r(R(J))}}$ is bounded by a universal continuous
function of $\phi$, $r$, $\|f\|_{2r-1}$,
and $D=\mathrm{diam}(\{0\}\cup J)$.
\end{lem}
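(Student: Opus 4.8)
The plan is to expand the higher derivatives of $f_\phi$ via the chain rule applied to the relation $f_\phi \circ R = I$ and to control each resulting term. The starting point is the second-derivative formula $f_\phi^{\prime\prime}\circ R = f^{\prime\prime}/(R^\prime)^3$ from \eqref{form: 1st 2nd deriv rotation}. The key elementary estimate is a lower bound on $|R^\prime|$: since $R^\prime = \cos\phi - f^\prime \sin\phi = \cos\phi\,(1 - f^\prime\tan\phi)$ and the hypothesis $\|f\tan\phi\|_r < 1$ gives $\|f^\prime \tan\phi\|_{r-1} < 1$ (the derivative drops one order of norm), we get $|R^\prime| \ge \cos\phi\,(1 - \|f\tan\phi\|_r) > 0$, bounded below by a continuous function of $\phi$ and $\|f\|_{2r-1}$ that stays positive on $U$. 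Likewise $|R^\prime|$, and all derivatives of $R$ and $I$ up to order $r$, are bounded above in terms of $\|f\|_r \le \|f\|_{2r-1}$ and $\phi$.

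Next I would set up the induction. Differentiating $f_\phi \circ R = I$ repeatedly and using Fa\`a di Bruno's formula, one sees that $f_\phi^{(m)}\circ R$ can be written as a universal polynomial in $I^\prime,\dots,I^{(m)}$, $R^\prime,\dots,R^{(m)}$, $1/R^\prime$, and the lower-order quantities $f_\phi^{\prime}\circ R,\dots,f_\phi^{(m-1)}\circ R$; more precisely, solving for the top term,
\begin{equation*}
f_\phi^{(m)}\circ R = \frac{1}{(R^\prime)^m}\left( I^{(m)} - \sum_{j=1}^{m-1} (f_\phi^{(j)}\circ R)\cdot P_{m,j}(R^\prime,\dots,R^{(m)}) \right),
\end{equation*}
where the $P_{m,j}$ are universal polynomials (coming from Fa\`a di Bruno) in the derivatives of $R$. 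Each invocation of the chain rule raises the order of the $R$- and $I$-derivatives needed by at most one and the number of nested compositions by one; iterating from $m=2$ up to $m=r$ shows that $f_\phi^{(r)}\circ R$ — equivalently $\|f_\phi\|_{C^r(R(J))}$ together with the zeroth-order term, which is controlled since $\sup|I| \le D + \|f\|_0$ roughly — is bounded by a universal continuous function of $\phi$, $r$, the quantities $\|R\|_r, \|I\|_r$, and $\sup 1/|R^\prime|$. Since $\|R\|_r$ and $\|I\|_r$ are bounded by continuous functions of $\phi$ and $\|f\|_r$, and here is where the full strength $\|f\|_{2r-1}$ enters — actually only $\|f\|_r$ is needed for the derivative bounds of $R, I$ themselves, but expressing $f_\phi^{(r)}$ requires derivatives of $R^\prime$ up to order $r-1$, i.e.\ of $f$ up to order $r$, so $\|f\|_r$ suffices for this part; the extra room up to $2r-1$ is a harmless over-estimate that I would not try to optimize — we obtain the claimed bound. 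The zeroth-order term $\max|f_\phi|$ on $R(J)$ is bounded by $\max|I| \le \sup_{x\in J}(|x||\sin\phi| + |f(x)||\cos\phi|) \le D + \|f\|_0$.

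The main obstacle is purely bookkeeping: verifying that the iterated chain-rule expressions genuinely assemble into a \emph{universal continuous} function of the listed arguments, with no hidden dependence on $f$ beyond $\|f\|_{2r-1}$ and no blow-up as $\phi$ ranges over the compact set $U$. The positivity of the lower bound for $|R^\prime|$ on $U$ — which is exactly what the hypothesis $\|f\tan\phi\|_r<1$ buys — is what prevents the $1/(R^\prime)^m$ factors from causing trouble, so once that bound is in hand the remaining estimates are routine products and compositions of continuous functions on compact sets. I would state the induction cleanly as: for each $m\le r$ there is a universal continuous function $\Phi_m$ with $\max_{R(J)}|f_\phi^{(m)}| \le \Phi_m\big(\phi, \|f\|_r, D, \textstyle\frac{1}{1-\|f\tan\phi\|_r}\big)$, prove it by induction on $m$ using the displayed recursion, and then sum over $m$.
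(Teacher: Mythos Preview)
Your approach is correct but genuinely different from the paper's. You invert Fa\`a di Bruno to solve for $f_\phi^{(m)}\circ R$ in terms of $I^{(m)}$, derivatives of $R$ up to order $m$, lower $f_\phi^{(j)}\circ R$, and powers of $1/R'$, then induct on $m$; this is sound and, as you correctly observe, uses only $\|f\|_r$ together with the pointwise lower bound $|R'|\ge \cos\phi\,(1-\|f\tan\phi\|_r)$. The paper instead exploits the one--step recursion $f_\phi^{(i+1)}\circ R=(f_\phi^{(i)}\circ R)'/R'$ and the submultiplicativity $\|pq\|_s\le\|p\|_s\|q\|_s$ of the $C^s$ norm, iterating to obtain a bound of the shape $\|f_\phi^{(i+1)}\circ R\|_r\le \|I'/R'\|_{r+i}\cdot\|(R')^{-1}\|^{\,i}$ and then controlling $\|(R')^{-1}\|_k$ via the geometric series $\cos\phi\,(R')^{-1}=\sum_{j\ge 0}(f'\tan\phi)^j$. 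The iteration is what drives the norm index up to $r+i\le 2r-1$, which is why the paper's statement involves $\|f\|_{2r-1}$ rather than $\|f\|_r$. Your route avoids Bell polynomials only implicitly (the ``universal polynomials'' $P_{m,j}$ are exactly the partial Bell polynomials), gives a sharper dependence on $f$, and makes the role of the hypothesis $\|f\tan\phi\|_r<1$ more transparent---it is used solely for the pointwise lower bound on $|R'|$. The paper's route is a bit slicker algebraically in that it never writes down the combinatorics, trading that for the looser $\|f\|_{2r-1}$ dependence; for the application in the paper (where $f$ is $C^\infty$) either bound suffices.
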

\begin{proof} Since
$\displaystyle{\max_{R(J)}|f_\phi^{(i)}|=\max_{J}|f_\phi^{(i)}\circ R|}$,
it is enough to estimate $|f_\phi^{(i)}\circ R|$. Note that
\[
\|f_\phi\|_{0}=\max_{x\in J}|I|= 
\max_{J} |x\sin\phi+f(x)\cos\phi|\le D+\|f\|_{0}.
\]
If $i\ge 0$, then
\[
\mbox{\small$
\|f^{(i+1)}_\phi\circ R\|_r=
\left\|\frac{(f^{(i)}_\phi\circ R)^\prime}{R^\prime}\right\|_r\le 
\left\|(f^{(i)}_\phi\circ R)^\prime\right\|_r \|(R^\prime)^{-1}\|_r\le
\left\|f^{(i)}_\phi\circ R\right\|_{r+1} \|(R^\prime)^{-1}\|_r.
$}
\] 
Iterating the inequality shows that $\|f^{(i+1)}_\phi\circ R\|_r$ is bounded above by
\[
\mbox{\small$
\left\|f^{\prime}_\phi\circ R\right\|_{r+i} \|(R^\prime)^{-1}\|^i_r=
\left\|\frac{I^\prime}{R^\prime}\right\|_{r+i} \|(R^\prime)^{-1}\|^i_r\le
\left\|I^\prime\right\|_{r+i} \|(R^\prime)^{-1}\|_{r+i} \|(R^\prime)^{-1}\|^i_r.
$}
\]
Now
\[
\mbox{\small$
\displaystyle{
|I^\prime|=|\sin\phi+\cos\phi\, f^\prime|\le 1+\max_J|f^\prime|\quad\text{and}\quad 
|I^{(l)}|=|\cos\phi\, f^{(l)}|\le\max_J|f^{(l)}|\ \text{for\ } l\ge 2. 
}
$}
\]
so that $\|I\|_{r+i}\le D + 1+ \|f\|_{r+i}$. To bound $\|(R^\prime)^{-1}\|_k$ write
\[
\mbox{\small$
\displaystyle{
\left((R^\prime)^{-1}\right)^{(l)}\cos\phi=
\left(
\frac{1}{1-\tan\phi\, f^\prime}
\right)^{(l)}=
\left(\sum_{j\ge 0} (\tan\phi\, f^\prime)^j\right)^{\!\!(l)}=
\sum_{j\ge 0} \left((\tan\phi\, f^\prime)^j\right)^{(l)}
}
$}
\]
hence
\[
\mbox{\small$
\displaystyle{
\cos\phi\,\|(R^\prime)^{-1}\|_k\le
\sum_{j\ge 0}\left\|(\tan\phi\, f^\prime)^j\right\|_{k}\le
\sum_{j\ge 0}\left\|\tan\phi\, f^\prime\right\|_{k}^j=\frac{1}{1-\|\tan\phi\, f^\prime\|_k}.
}
$}
\]
Thus $\|f_\phi\|_r$ is bounded above by
\[ 
\mbox{\small$
\displaystyle{
\|f_\phi\|_0+
\sum_{i=0}^{r-1}
\|f^{(i+1)}_\phi\circ R\|_r\le
D+\|f\|_{0}+
\sum_{i=0}^{r-1}
\frac{D + 1+ \|f\|_{r+i}}{(\cos\phi-\|\sin\phi\, f\|_{r+i})
(\cos\phi-\|\sin\phi\, f\|_{r})^i}.
}
$}
\]
and since $\|f\|_{0}$,  $f\|_{r+i}$, $\|f\|_{r}$ are bounded above by $\|f\|_{2r-1}$
the claim follows. 
\end{proof}

\section{Smoothing a hinge}
\label{sec: smoothing a hinge}

A point $p$ on a $C^\infty$ plane curve is {\em infinitely flat\,}
if in a local chart that takes $p$ to $0$
and the tangent line at $p$ to the $x$-axis the curve
becomes the graph of a function $g$ with $g^{(r)}(0)=0$ for 
all $r\in N$.

A {\em hinge\,} is the union of two straight line segments
in $\R^2$ that have a common endpoint, which we call an 
{\em apex\,}. A {\em $(l, r, \a)$-hinge\,} is
a hinge with segments of lengths $l$, $r$
and angle $\a$ at the apex. 

A {\em convex smoothing of the hinge\,} 
is a convex $C^\infty$ plane curve that joins the endpoints of the hinge
and is infinitely flat at the endpoints. 

A plane curve joining two distinct points 
is {\em modelled on a function $f$ near endpoints\,} if each endpoint of
the curve has a neighborhood that can be taken by a rigid motion of $\R^2$ 
to a portion of the graph of $f$.

The main result of this section is Lemma~\ref{lem: smooth hinge} 
which constructs a convex smoothing of a hinge modelled on a certain function
near the endpoints and such that all of its derivatives are controlled. 
The existence of a convex smoothing prescribed near the endpoints
is nearly obvious. It is less clear how to
control the $C^r$ norms of the smoothing and the methods we develop 
for the purpose are lengthy and rather delicate. If we start with a hinge of fixed
sidelength and make the angle at the apex close to $\pi$
it is believable that we can find a smoothing with small $C^r$ norm.
The main difficulty is that
we have to work with hinges whose diameters tends to zero, and hence to retain convexity
we have to shorten the prescribed portion of the graph near the endpoints. Doing so
may a priori increase the $C^r$ norm of the smoothing, and thus we need a uniform estimate on the $C^r$ norm that
persists when the hinge gets smaller. We overcome this difficulty via an explicit
``partition of unity'' construction. An apparent deficiency of this method is that it fails to 
prescribe the smoothing arbitrarily near the endpoints, where we have to assume that
the curve can be rotated to the graph of a pliable function, as defined below.

A sequence $(a_k)$ {\em grows at most exponentially\,} if 
the sequence $2^{k\g} a_k$ is bounded for some $\g\in\R$.
A sequence $(a_k)_{_{k\ge K}}$ {\em super-exponentially converges\,} to $a$ 
if for every $\g\in \R$ we have $2^{k\g}(a_k-a)\xrightarrow[k\to\infty]{} 0$.

Given $k_0\in \mathbb Z$ and an endpoint $c$ of a compact interval $J\subset \R$, 
we call $f\in C^\infty(J)$
{\em pliable at $c$\,} 
if $\mbox{\small$f={\displaystyle\sum_{k\ge k_0} c_k g_k}$\!\!}$  where\vspace{-3pt}
\begin{itemize}
\item[(i)]
$(c_k)$ is a sequence of positive reals that super-exponentially
converges to $0$.\vspace{2pt}
\item[(ii)]
$g_k\in C^\infty(J)$ 
such that $\{\supp g_k\}_{_{k\ge k_0}}$
is a locally finite family of compact subsets of $J\setminus\{c\}$,
and for every $r$ the sequence of the
$C^r$ norms of $g_k$ grows at most exponentially.\vspace{2pt}
\item[(iii)]
there is  $L>0$ such that if 
$0<2\e<\text{length of}\ J$, and $J_\e$ is the set of
indices $k$ such that $\supp g_k$ intersects $\{x\in J: \e\le |x-c|\le 2\e\}$,
then $J_\e$ has at most $L$ elements and $2^{-kL}\le\e$
for every $k\in J_\e$. 
\end{itemize}

\begin{lem}
\label{lem: pliable}
If $\mbox{\small
${f=\displaystyle\sum_{k\ge k_0} c_k g_k}$
\!\!}$ 
is pliable at $c$, then the following hold:\vspace{-4pt}
\begin{itemize}
\item[(a)] 
The partial sums $\mbox{\small${\displaystyle\sum_{k=k_0}^l c_k g_k}$\!\!}$ 
converge to $f$ in the $C^r$ norm for each $r\in\N$. \vspace{2pt}
\item[(b)] 
$f^{(r)}(c)=0$ and $\mbox{\small${\displaystyle f^{(r)}=\sum_{k\ge k_0} c_k g_k^{(r)}}$\!\!}$ is pliable at $c$ for each $r\in \N$. \vspace{3pt}
\item[(c)] 
$hf+g$ is pliable at $c$ for every $h, g\in C^\infty(J)$  with $\supp\, g\subset J\setminus\{c\}$.\vspace{2pt}
\item[(d)]
$f\circ\iota$ is pliable at $\iota^{-1}(c)$ for each isometry $\iota$ of $\R$.\vspace{2pt}
\item[(e)] 
If $d\co K\to J$ is a diffeomorphism, then $f\circ d$ is pliable at $d^{-1}(c)$.
\end{itemize}
\end{lem}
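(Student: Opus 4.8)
The plan is to verify each of the five assertions directly from the definition of pliability, since the definition was engineered precisely to be stable under these operations. Throughout, write $f=\sum_{k\ge k_0}c_k g_k$ with $(c_k)$, $(g_k)$, $L$ as in (i)--(iii). The unifying observation is that conditions (i) and (ii) are exactly an ``exponential vs.\ super-exponential'' growth competition: in any $C^r$ norm the term $c_k g_k$ has norm at most $c_k\|g_k\|_r$, and since $\|g_k\|_r$ grows at most exponentially in $k$ while $c_k$ converges to $0$ super-exponentially, the product $c_k\|g_k\|_r$ is super-exponentially small, hence summable. This already gives (a): the partial sums $\sum_{k=k_0}^l c_kg_k$ form a Cauchy sequence in every $C^r$ norm, so they converge in $C^r$, and the limit is $f$ since $C^r$ convergence implies pointwise convergence.

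For (b), differentiating term by term is justified by (a) applied to the already-established $C^{r+1}$ convergence, giving $f^{(r)}=\sum_k c_k g_k^{(r)}$. Since each $\supp g_k\subset J\setminus\{c\}$, the sum and all its partials vanish at $c$, so $f^{(r)}(c)=0$. To see $f^{(r)}$ is pliable, keep the same $(c_k)$ and replace $g_k$ by $g_k^{(r)}$: condition (i) is untouched; for (ii), $\supp g_k^{(r)}\subset \supp g_k$ so local finiteness and the exclusion of $c$ persist, and $\|g_k^{(r)}\|_s\le\|g_k\|_{s+r}$ still grows at most exponentially in $k$; condition (iii) holds with the same $L$ because shrinking supports only shrinks each index set $J_\e$. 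For (c), write $hf+g=\sum_k c_k(hg_k)+g$. I would absorb the single extra term $g$ by noting $g=c_{k_1}g$ for a suitable dummy, or more cleanly observe that adding one compactly-supported (away from $c$) smooth function to a pliable function preserves pliability --- reindex so that one summand is replaced by $g_k+c_k^{-1}g$ for one chosen $k$, checking this still has at-most-exponential $C^r$ growth since it is a finite modification. For the $hf$ part, use $\|hg_k\|_r\le\|h\|_r\|g_k\|_r$ (submultiplicativity of $\|\cdot\|_r$), so $\|hg_k\|_r$ still grows at most exponentially; supports only shrink, so (ii) and (iii) survive with the same $L$.

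For (d), an isometry $\iota$ of $\R$ is $x\mapsto \pm x+t$, so $g_k\circ\iota$ has the same $C^r$ norm as $g_k$ (chain rule with derivative $\pm1$), supports are translated/reflected hence still locally finite and still avoid $\iota^{-1}(c)$, and for (iii) the annulus $\{\e\le|x-c|\le 2\e\}$ pulls back to the annulus about $\iota^{-1}(c)$ of the same radii, so $J_\e$ is literally unchanged and the same $L$ works. Part (e) is the genuinely substantive one and the step I expect to be the main obstacle, because a general diffeomorphism $d\co K\to J$ distorts both the $C^r$ norms (via the Fa\`a di Bruno formula, derivatives of $g_k\circ d$ up to order $r$ involve derivatives of $g_k$ up to order $r$ times bounded factors coming from $d$ and its derivatives on the compact interval) and, more delicately, the radii in condition (iii): the preimage under $d$ of the annulus $\{\e\le|x-c|\le2\e\}$ is comparable, but not equal, to an annulus about $d^{-1}(c)$. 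Here I would use that $d$ is a diffeomorphism of compact intervals, so there are constants $0<m\le M$ with $m|y-d^{-1}(c)|\le|d(y)-c|\le M|y-d^{-1}(c)|$ for $y\in K$; hence $\supp(g_k\circ d)$ meets the $\e$-annulus about $d^{-1}(c)$ only if $\supp g_k$ meets the (slightly larger) $(m\e/2,\,2M\e)$-annulus about $c$, which is covered by boundedly many (depending only on $m,M$) dyadic annuli of the original form, so the new index set has size at most $L'=CL$ for an absolute constant $C=C(m,M)$; and the bound $2^{-kL}\le m\e/2$ for $k$ in the original annuli upgrades to $2^{-kL'}\le\e$ after enlarging $L$ once more. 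For the $C^r$-norm bound, Fa\`a di Bruno gives $\|g_k\circ d\|_r\le P_r\big(\|d\|_r\big)\cdot\|g_k\|_r$ for a universal polynomial $P_r$, and $\|d\|_r$ is a fixed finite constant, so at-most-exponential growth in $k$ is preserved. Thus $f\circ d=\sum_k c_k(g_k\circ d)$ exhibits $f\circ d$ as pliable at $d^{-1}(c)$, completing the proof.
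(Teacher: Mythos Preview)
Your proposal is correct and follows essentially the same approach as the paper: parts (a)--(d) are dismissed there as ``straightforward'' (with (b) noted to follow from (a)), and for the substantive part (e) the paper does exactly what you do---use the bi-Lipschitz constants of $d$ to cover the distorted annulus $[d(\e),d(2\e)]$ by a bounded number (depending only on the Lipschitz constant) of dyadic annuli, apply the original condition (iii) on each, and enlarge $L$ accordingly, while handling the $C^r$-norm bound via the chain rule. The only cosmetic difference is that the paper first composes with isometries (invoking (d)) to reduce to $c=0=d^{-1}(c)$ and uses (c) to assume $k_0\ge 1$, whereas you work directly with annuli centered at $c$ and $d^{-1}(c)$.
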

\begin{proof} (a), (c), (d) are straightforward, and (a) implies (b). Let us prove (e).
We use the series $\mbox{\small${\displaystyle f\circ d=\sum_{k\ge k_0} c_k\,  g_k\circ d}$\!\!}$ for which (i) is trivial. 
To check (ii) apply the chain rule iteratively to see that
the $C^r$ norms of $g_k\circ d$ grow at most exponentially.
 
It remains to verify (iii).
By composing with isometries of $\R$ we can assume that $c=0=d^{-1}(c)$ and $K$, $J$ consists of nonnegative reals.
We can also assume $k_0\ge 1$ else by (c) we simply add to $f$ the partial sum 
$\mbox{\small$-{\displaystyle\sum_{k=k_0}^0 c_k  g_k}$\!\!}$,
and then adjust $f\circ d$ accordingly.
Fix $\e>0$ with $2\e$ less that the length of $K$. 
Note that $\mathrm{supp}\, g_k\circ d$ intersects $[\e,2\e]$
if and only if $\mathrm{supp}\, g_k$ intersects $[d(\e), d(2\e)]$.
Fix any $\l$ such that $d$ and $d^{-1}$ are $\l$-Lipschitz.
Let $l$ be the least positive integer with $2^{-l}d(2\e)\le d(\e)$. 
Since $d(2\e)\le 2\l\e$ and $d(\e)\ge \e/\l$ we get $\frac{d(2\e)}{d(\e)}\le 2\l^2$
so $l\le l_\l=1+\log_2(2\l^2)$.
Since $f$ is pliable at $0$ and
\[
\left[d(\e), d(2\e)\right]\subset\bigcup_{s=1}^l [2^{-s}d(2\e), 2^{1-s}d(2\e)]
\]  
for at most $lL$ values of $k$ the support of $g_k$ intersects $[d(\e), d(2\e)]$
and each of these $k$ satisfies $2^{-kL}\le\frac{d(2\e)}{2}\le \l\e$.
Thus for $L_\l=\max\left(l_\l L, L+|\log_2 \l|\right)$ we have $L_\l\ge lL$ and
$2^{-kL_\l}\le \e$, which proves (iii). 
\end{proof}

\begin{lem}
\label{lem: f_phi'' pliable}
Suppose $f$ is pliable at $c$ and $f_\phi$ is defined. Then 
$f_\phi^{\prime\prime}$ is pliable at $R(c)$ and
if $\phi\neq 0$, then $f^\prime_\phi$ is not pliable at $R(c)$.
\end{lem}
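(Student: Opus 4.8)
The plan is to exploit the formulas in~\eqref{form: 1st 2nd deriv rotation} together with the closure properties of pliability established in Lemma~\ref{lem: pliable}. Recall $f_\phi^{\prime\prime}\circ R = f^{\prime\prime}/(R^\prime)^3$, where $R = x\cos\phi - f(x)\sin\phi$. First I would note that since $f$ is pliable at $c$, by Lemma~\ref{lem: pliable}(b) the second derivative $f^{\prime\prime}$ is pliable at $c$, and then by Lemma~\ref{lem: pliable}(c) the product $h\cdot f^{\prime\prime}$ is pliable at $c$ for any $h\in C^\infty(J)$; here we take $h = (R^\prime)^{-3}$, which is smooth because $R^\prime$ does not vanish (as noted just after~\eqref{form: 1st 2nd deriv rotation}). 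Thus $f_\phi^{\prime\prime}\circ R$ is pliable at $c$. Now $R\co J\to R(J)$ is a diffeomorphism ($R^\prime\neq 0$), so applying Lemma~\ref{lem: pliable}(e) to $d = R^{-1}$ shows that $(f_\phi^{\prime\prime}\circ R)\circ R^{-1} = f_\phi^{\prime\prime}$ is pliable at $R(c)$. This is the first assertion.

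For the second assertion, suppose toward a contradiction that $\phi\neq 0$ and $f_\phi^\prime$ is pliable at $R(c)$. By Lemma~\ref{lem: pliable}(b) pliability would give $(f_\phi^\prime)(R(c)) = 0$, i.e. the rotated graph has horizontal tangent at the image of $c$. But from~\eqref{form: 1st 2nd deriv rotation},
\[
f_\phi^\prime\circ R = \frac{\sin\phi + f^\prime\cos\phi}{\cos\phi - f^\prime\sin\phi},
\]
and since $f$ is pliable at $c$ we have $f^\prime(c) = 0$ by Lemma~\ref{lem: pliable}(b), so $(f_\phi^\prime\circ R)(c) = \tan\phi \neq 0$. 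This contradicts $(f_\phi^\prime)(R(c)) = (f_\phi^\prime\circ R)(c) = 0$, so $f_\phi^\prime$ cannot be pliable at $R(c)$.

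The only point requiring a little care is making sure the hypotheses of Lemma~\ref{lem: pliable}(c) and (e) are genuinely met: for (c) we need $(R^\prime)^{-3}\in C^\infty(J)$, which holds since $R^\prime = \cos\phi - f^\prime\sin\phi$ is smooth and nonvanishing on the compact interval $J$ (if it vanished at a point, $I^\prime$ would vanish there too, but $R^\prime, I^\prime$ have no common zero); and for (e) we need $R$ to be a genuine diffeomorphism onto its image, which again follows from $R^\prime\neq 0$ together with $R$ being defined on an interval. I expect the main (minor) obstacle to be simply assembling these observations in the right order; there is no substantive analytic difficulty, since all the hard work on pliability is already packaged in Lemma~\ref{lem: pliable} and all the hard work on rotated graphs in the formulas of Section~\ref{sec: rotated graphs}.
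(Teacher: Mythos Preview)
Your proposal is correct and follows essentially the same approach as the paper's proof: the paper also deduces both claims from the formulas~\eqref{form: 1st 2nd deriv rotation} together with Lemma~\ref{lem: pliable}(b), (c), (e), only it is much terser, saying just ``since $f''$ is pliable at $c$, so is $f_\phi''$ thanks to Lemma~\ref{lem: pliable}(c), (e)'' without unpacking the two steps (multiply by $(R')^{-3}$, then precompose with $R^{-1}$) that you spell out explicitly. Your treatment of the second assertion is identical to the paper's.
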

\begin{proof} Consider the formulas (\ref{form: 1st 2nd deriv rotation}).
Since $f^\prime(c)=0$, we get
$f^\prime_\phi(R(c))=\tan\phi\neq 0$ so $f^\prime_\phi$ is not pliable at $R(c)$
by Lemma~\ref{lem: pliable}(b). Since $f^{\prime\prime}$ is pliable at $c$, so is
$f_\phi^{\prime\prime}$ thanks to Lemma~\ref{lem: pliable}(c), (e).
\end{proof}

\begin{rmk}
If $f$ is pliable at $0$, then $R(0)=0$, hence $f_\phi^{\prime\prime}$ is also
pliable at $0$. 
\end{rmk}

\begin{lem} 
\label{lem: smooth hinge}
Let $f\in C^\infty([0,\tau])$ be pliable at $0$ and
such that $f^\prime$ and $f^{\prime\prime}$ are positive on $(0,\tau]$. 
Then there exists a sequence of $(l_m, r_m, \a_m)$-hinges, $m>0$, 
and a sequence of their convex smoothings such that
\begin{itemize}
\item each smoothing is 
modelled on $f$ near the endpoints,
\item 
each smoothing has positive curvature everywhere except at the endpoints, 
\item
each smoothing is the graph of a convex function $F_m$ and 
for each $r\in\Z_{\ge 0}$ the $C^r$ norms of $F_m$
are uniformly bounded,\vspace{2pt}
\item 
$\displaystyle{\sum_{m>0} 2^m(l_m+r_m)<\infty}$ 
and $\displaystyle{\sum_{m>0}\,}2^m(\pi-\a_m)=\frac{\pi}{n}$ for some integer $n\ge 2$.
\end{itemize}
\end{lem}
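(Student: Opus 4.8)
The plan is to build the hinges and smoothings concretely, exploiting the pliability of $f$ so that the smoothing of the $m$-th hinge is essentially a rescaled copy of a fixed building block. First I would fix a single convex smoothing of one model hinge: using $f$ and the reflected function $f\circ l$ with $l(x)=\tau-x$, I would produce a convex $C^\infty$ function $\hat f$ on $[0,\tau]$ that equals $f$ near $0$, equals $f\circ l$ near $\tau$, and has positive second derivative on the open interval. The existence of such an $\hat f$ is the ``nearly obvious'' part alluded to in the text: take a partition of unity $\{\psi_0,\psi_1\}$ subordinate to the cover of $[0,\tau]$ by a left neighborhood of $0$ and a right neighborhood of $\tau$, and set $\hat f=\psi_0 f+\psi_1(f\circ l)+(\text{a convex correction supported in the middle})$; convexity on the interior is arranged because $f''>0$ there and the correction can be chosen with large enough second derivative where the two pieces are blended. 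The graph of $\hat f$ is then a convex smoothing of a $(l,r,\a)$-hinge for appropriate $l,r,\a$ determined by the tangent directions of $f$ at $0$ and of $f\circ l$ at $\tau$; since $f$ is pliable at $0$, all derivatives of $f$ vanish at $0$, so the tangent at the left endpoint is horizontal, the graph is infinitely flat there, and symmetrically at the right endpoint. This handles one hinge; the issue is producing a whole sequence with the stated summability and with uniformly bounded $C^r$ norms.

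Next I would obtain the $m$-th hinge by scaling. Replace $f$ by $f_m(x)=c^{-1}f(cx)$ for a suitable shrinking factor (or more precisely apply an affine rescaling of $\R^2$); pliability is preserved under the diffeomorphism $x\mapsto cx$ by Lemma~\ref{lem: pliable}(e), and under the dilation of the plane the model function stays pliable at $0$ with its first two derivatives still positive on the open interval. Running the construction of the previous paragraph on $f_m$ over $[0,\tau_m]$ with $\tau_m\to 0$ geometrically produces $(l_m,r_m,\a_m)$-hinges with $l_m,r_m$ decaying geometrically and $\pi-\a_m$ decaying geometrically, so that $\sum_m 2^m(l_m+r_m)<\infty$ can be arranged and $\sum_m 2^m(\pi-\a_m)$ can be tuned to equal $\pi/n$ for any chosen integer $n\ge 2$ (the sum depends continuously and monotonically on the scaling parameters, and shrinking all of them drives it to $0$, so by an intermediate value argument we hit $\pi/n$ exactly). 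Each $F_m$ is a convex function whose graph is the smoothing of its hinge; it is infinitely flat at its two endpoints and has positive curvature in between by construction.

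The genuine difficulty, and the place where Lemmas~\ref{lem: rotation} and \ref{lem: pliable} do the work, is the uniform bound on $\|F_m\|_r$: a priori, rescaling a fixed smoothing to shorter and shorter intervals blows up derivatives, and the partition-of-unity correction in the middle of the hinge could be even worse. Here is where I would invoke the precise definition of pliability. Condition (iii) is exactly the scale-invariant combinatorial bound that lets the ``partition of unity'' in the middle be chosen with $C^r$ norm controlled independently of $m$: when the hinge shrinks we must shorten the prescribed portion of the graph near each endpoint to retain convexity, which amounts to discarding the first finitely many terms $c_k g_k$ of the pliable series; conditions (i) and (ii) (super-exponential decay of $c_k$ against at-most-exponential growth of $\|g_k\|_r$) guarantee the discarded tails, after rescaling, still have $C^r$ norm tending to $0$, so the ``defect'' to be filled by the middle correction is uniformly small. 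Combining this with the estimate of Lemma~\ref{lem: rotation} (applied to the rotation that realizes the hinge angle, with the hypothesis $\|f\tan\phi\|_r<1$ satisfied because $\phi_m\to 0$ and $\|f_m\|_{2r-1}$ is controlled) gives the $C^r$ bound on the rotated model piece near each endpoint, and the middle is bounded directly. I expect the main obstacle to be precisely this simultaneous bookkeeping — keeping convexity while shortening the model portion, and tracking how many pliable-series terms get discarded as $\tau_m\to 0$ — and condition (iii) of pliability is tailored so that the count stays bounded by the constant $L$ and the supports live at scale $\gtrsim 2^{-kL}$, which is what makes the rescaled corrections uniformly tame.
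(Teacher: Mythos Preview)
Your construction has a genuine gap at the very first step. Because $f$ is pliable at $0$, all derivatives of $f$ vanish there, so both $f$ and $f\circ l$ have \emph{horizontal} tangent at the endpoints $0$ and $\tau$. Hence your $\hat f$ has horizontal tangent at both endpoints, and the only ``hinge'' its graph can smooth is the degenerate one with angle $\pi$. Rescaling (your second paragraph) cannot repair this: a dilation of $\R^2$ preserves angles, so every $\alpha_m$ would still equal $\pi$ and the required identity $\sum_m 2^m(\pi-\alpha_m)=\pi/n$ would fail. There is a second, related problem with rescaling: ``modelled on $f$ near the endpoints'' means that a neighborhood of each endpoint is carried by a \emph{rigid motion} to a piece of the graph of $f$. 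The map $(x,y)\mapsto(c^{-1}x,c^{-1}y)$ is not rigid, so the rescaled smoothings are modelled on $f_m$, not on $f$.

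What the paper does instead is exactly the missing ingredient: it \emph{rotates} the graph of $f$ by a small angle $\gamma$ (and the reflected graph by $-\gamma$) before patching, so that the tangents at the two endpoints have slopes $\pm\tan\gamma$ and the hinge angle is $\pi-2\gamma$. The smoothing $F_\e$ is obtained by solving $F''=f_u''\Phi_u^\e+f_v''\Phi_v^\e+b_\e\Phi_0^\e$ on $[-d,d]$, where $f_u,f_v$ are the rotated models and $\Phi_u^\e,\Phi_v^\e,\Phi_0^\e$ are explicit cutoffs at scale $\e$; the coupling $f'(4\e)=\tan\gamma$ (Lemma~\ref{lem: epsilon}) forces $b_\e>0$ and hence $F''>0$ on the interior. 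The $C^r$ bound then reduces to controlling products $(\Phi_u^\e)^{(s)}f_u^{(l)}$ with $s\ge 1$: the cutoff contributes $(2\e)^{-s}$, while pliability of $f_u^{(l)}$ at $-d$ (Lemma~\ref{lem: f_phi'' pliable}) says that on $[\e-d,2\e-d]$ only $\le L$ terms $c_kg_k$ survive, each with $2^{-kL}\le\e$, and super-exponential decay of $c_k$ beats $(2\e)^{-s}$. Your third paragraph gestures toward this mechanism, but without the rotation there is no hinge to smooth, and with your rescaling the ``modelled on $f$'' clause is violated before any estimate is needed.
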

\begin{proof} 
Since $f$ is pliable at $0$, $f^{(r)}(0)=0$ for all $r\in\Z_{\ge 0}$,
and $0$ is the only point where the graph of $f$ is infinitely flat because
$f^{\prime\prime}>0$ at other points.

For positive reals $d$, $\g$ with $4d<\tau$ and 
$\g<\frac{\pi}{3}$ 
let $h=d\tan\g$ and
consider the hinge $V$ in $\C$ with apex $0$ and endpoints 
$u=-d+ih$, $v=d+ih$; thus $V$ has sidelengths 
$\frac{d}{\cos\g}$.

Translating the graph of $f$ to the left by $\frac{d}{\cos\g}$
and then rotating it clockwise by $\g$ results in 
a curve that is tangent to $V$ at $u$
and is the graph of a convex function which we denote $f_u$.

Reflecting the graph of $f$ with respect to the $y$-axis, translating
the result to the right by $\frac{d}{\cos\g}$, and then rotating it 
counterclockwise by $\g$ gives a curve that is tangent to $V$ at $v$
and is the graph of a convex function which we denote $f_v$.

Our assumptions of $d$, $\tau$ and $\g$ imply that
the domains of $f_u$, $f_v$ contain $[-d,d]$.
Also $-f_u^\prime(-d)=\tan\g=f_v^\prime(d)$.
Since a graph of function has zero curvatures precisely where
its second derivative vanishes, 
$f_u^{\prime\prime}$, $f_v^{\prime\prime}$ are positive away from $-d$, $d$,
respectively. By Lemma~\ref{lem: f_phi'' pliable} 
$f_u^{\prime\prime}$, $f_v^{\prime\prime}$ are pliable at $-d$, $d$, respectively, so that
$f_u^{(r)}(-d)=0=f_v^{(r)}(d)$ for each $r\ge 2$. 

Let $\Phi\in C^\infty(\R)$ be a bump function with
$\Phi\vert_{[-\frac{1}{2},\frac{1}{2}]} = 1$, 
and $\supp\Phi=[-1,1]$. 
For $0<4\e<d$ and $-d<x<d$ consider the functions
\[
\Phi_u^\e(x)=\Phi\left(\frac{d+x}{2\e}\right) \qquad
\Phi_v^\e(x)=\Phi\left(\frac{d-x}{2\e}\right) \qquad 
\Phi_0^\e(x)=\Phi\left(\frac{\e}{d-|x|}\right).
\]
Note that $\Phi_u^\e$, $\Phi_v^\e$, $\Phi_0^\e$
are constant near $\pm d$ and $0$, so
they extend to $C^\infty$ functions 
on $\R$ that are constant for $|x|\ge d$.  
This allows us to think of
$f_u^{\prime\prime}\Phi_u^\e$, 
$f_v^{\prime\prime}\Phi_v^\e$ as $C^{\infty}$ functions on $\R$.

Let $F_\e$ be the solution of 
\begin{equation}
\label{eq: F''}
F^{\prime\prime} = f_u^{\prime\prime}\Phi_u^\e + 
f_v^{\prime\prime}\Phi_v^\e + b_\e\Phi_0^\e
\end{equation}
subject to the initial conditions $F^{\prime}(-d) = f_u^{\prime}(-d)$,
$F(-d) = f_u(-d)$. Here $b_\e$ is a constant
for which $F^\prime(d)=f_v^\prime(d)=\tan\g$, i.e., $b_\e$ is 
the unique solution of
\begin{equation}
\label{eq: F'}
f_v^\prime(d)=F_\e^\prime(d)=F_\e^\prime(-d)+
\int\limits_{\!\!-d}^{\,d} 
\!\!\left( f_u^{\prime\prime}\Phi_u^\e + f_v^{\prime\prime}\Phi_v^\e\right)
\,+\,b_\e\!\int\limits_{\!\!-d}^{\,d}\! \Phi_0^\e\
\end{equation}
which exists because the integral of $\Phi_0^\e$ over $[-d,d]$ is positive.
For the rest of the proof we choose $\e$ as in the following lemma.

\begin{lem}
\label{lem: epsilon} 
For every sufficiently small $\g$ there exists a unique $\e$
such that $f^\prime(4\e)=\tan\g$ and $f^\prime_u(2\e-d)<0<f_v^\prime(d-2\e)$.
\end{lem}
\begin{proof}
Suppose $\g$ is so small that
$\tan\g$ is in the range of $f^\prime$.
Then there clearly exists a unique
$\e$ with $f^\prime(4\e)=\g$, and we are
going to show that the other inequalities also hold.
Since $f_u(t)=f_v(-t)$
for all $t$ in the domain of $f_u$, 
we have $f^\prime_u(2\e-d)=-f_v^\prime(d-2\e)$,
so it suffices to show that $f^\prime_u(2\e-d)<0$.
To ease notations set $h(x)=f(x+\frac{d}{\cos\g})$.
The graph of $f_u$ is the image of the curve 
$x\to e^{-i\g}(x+ih(x))$ in $\C$
whose real and imaginary parts are
\[
R(x)=x\cos\g+h(x)\sin\g\quad\text{and}\quad 
I(x)=-x\sin\g+h(x)\cos\g
\] 
respectively. Thus $f_u\circ R=I$;  
differentiating this identity gives
\[
f_u^\prime(R(x))=\frac{I^\prime(x)}{R^\prime(x)}=
\frac{-\sin\g+h^\prime(x)\cos\g}
{\cos\g+h^\prime(x)\sin\g}
\]
where clearly $R^\prime>0$. 
Hence $f_u^\prime(R(x))\le 0$ if and only if 
$h^{\prime}(x)\le \tan\g$.
Since $f^\prime$ and $h^\prime$ have the same range,
there is a unique $y_\e$ with $h^\prime(y_\e)=\tan\g$.
The functions $h^\prime$ and $R$ are non-decreasing, hence
$h^{\prime}(x)\le \tan\g$
is equivalent to $R(x)\le R(y_\e)$. We conclude that 
$f_u^\prime(2\e-d)\le 0$ if and only if $2\e-d\le R(y_\e)$.
Since $y_\e\cos\g< R(y_\e)$ the desired inequality $f_u^\prime(2\e-d)< 0$
would follow from  $2\e\le d+y_\e\cos\g$ which after dividing by $\cos\g$
and applying $f^\prime$ is equivalent to
\begin{equation*}
f^\prime\!\left(\frac{2\e}{\cos\g}\right)\le 
f^\prime\!\left(\frac{d}{\cos\g}+y_\e\right)=
h^\prime(y_\e)=\tan\g=f^\prime(4\e)
\end{equation*}
which holds because $\frac{1}{\cos\g}<2$. This proves
Lemma~\ref{lem: epsilon}.
\end{proof}

For such $\e$ the constant $b_\e$ is positive because 
$f^\prime_u(2\e-d)<0<f_v^\prime(d-2\e)$ implies
\begin{equation}
\label{eq: bounds on double prime term}
\int\limits_{\!\!-d}^{\,d} 
 f_u^{\prime\prime}\Phi_u^\e\, <
\int\limits_{\!\!-d}^{2\e-d} 
\!\! f_u^{\prime\prime} <
-f_u^\prime(-d)
\qquad\text{and}\qquad
\int\limits_{\!\!-d}^{\,d} 
f_v^{\prime\prime}\Phi_v^\e\, < \int\limits_{d-2\e}^{\,d} 
f_v^{\prime\prime} <
f_v^\prime(d)
\end{equation}
which together with (\ref{eq: F'}) shows that 
the integral of $b_\e\Phi_0^\e$ over $[-d,d]$ is positive.

Thus each summand in (\ref{eq: F''}) is nonnegative. 
Furthermore $\Phi_u^\e$, $\Phi_v^\e$, $\Phi_0^\e$ has no common zeros,
hence $F_\e^{\prime\prime}\vert_{(-d, d)}>0$ and $F_\e^{\prime\prime}(-d)=
0=F_\e^{\prime\prime}(d)$.

The $C^{\infty}$ function $F_\e^{\prime\prime}$ restricts
to $f_u^{\prime\prime}$, $f_v^{\prime\prime}$ on the $\e$-neighborhoods
of $-d$, $d$, respectively. In view of the initial conditions
$F_\e=f_u$ on the $\e$-neighborhood of $-d$, and 
since $F_\e^\prime(d)=f_v^\prime(d)$, the function
$F_\e-f_v$ is constant on the $\e$-neighborhood of $d$.

The graph of $F_\e$ generally need not be tangent to
the hinge $V$ at $d$. On the other hand
the tangent lines to the graph of $F_\e$ at $-d$, $d$ intersect
so that the straight line segments joining the intersection
point with $F_\e(-d)$, $F_\e(d)$ form a hinge, denoted $V_{F_\e}$, and
the graph of $F_\e$ is a convex smoothing of $V_{F_\e}$ that is modelled on $f$.

By the triangle inequality the sum of sidelengths
of $V_{F_\e}$ is $\le \frac{4d}{\cos\g}$. 
(In fact, a plane geometry argument shows that the sum of sidelengths
of $V_{F_\e}$ and of $V$ are the same, i.e., $\frac{2d}{\cos\g}$ 
but we do not need this here).

Fix $r$ let us find bounds on the $C^r$ norm of $F_{\e}$. 
A sketch of $V$, $V_{F_\e}$ and the graph of $F_\e$ reveals that
$|F(t)|\le 3d\tan\g$ for $t\in [-d,d]$. To estimate $F^\prime$ write
\begin{equation}
\label{eq: F'(t)}
F_\e^\prime(t) =F_\e^\prime(-d)+ \int_{-d}^{t} 
\!\!\left( f_u^{\prime\prime}\Phi_u^\e + f_v^{\prime\prime}\Phi_v^\e\right)\, +\, 
b_\e\!\int_{-d}^{t}\!\! \Phi_0^\e
\end{equation}
and note that on the right hand side the first summand equals $-\tan\g$,
the second summand is within $(0,2\tan\g)$ because of (\ref{eq: bounds on double prime term}),
and the third summand is at most $2d\,b_\e$. To estimate $b_\e$ note that
the integral of $\Phi_0^\e$ over $[-d,d]$ is within
$(2d-4\e, 2d)$ so that (\ref{eq: F'(t)}) implies 
$b_\e\le\frac{\tan\g}{d-2\e}<2\frac{\tan\g}{d}$.
Thus
\[
\max_t|F_\e^\prime(t)|<\tan\g+2\tan\g+4d\frac{\tan\g}{d}=
7\tan\g
\]
and it remains to bound the $C^{r-2}$ norm of $F_\e^{\prime\prime}$.

Iterated derivatives of $b_\e\Phi_0^\e$ yield
terms $b_\e\,\,\e^m\,(d-|x|)^{-k}\,c\Phi_0^{(l)}$ where $|c|$, $k$, $l$, $m$
are positive integers depending only on the order of differentiation. 
Since $\Phi_0$ vanishes on
the $\e$-neighborhoods of $\pm d$, the above term is bounded above
by $b_\e\,\e^{m-k}\,|c\,\Phi_0^{(l)}|$. This is uniformly bounded in $\e$
because $\tan\g=f^\prime(4\e)$ and  all derivatives of $f$ vanish
at $0$.

The first two summands of (\ref{eq: F''}) are treated in the same way
so we focus on $f_u\Phi_u^\e$. 
A bound on $|f_u^{(r)}|$ in terms of $|f^{(r)}|$ and 
some upper bound on $d$ and $\g$ follows from Lemma~\ref{lem: rotation}; the same bound works for
$|f_u^{(r)}\Phi_u^\e|\le |f_u^{(r)}|$. 

It remains to consider
the terms $(\Phi_u^\e)^{(s)}f_u^{(l)}$ with $s>0$ and $l\ge 2$.
Note that $(\Phi_u^\e)^{(s)}=(2\e)^{-s}\Phi^{(s)}$
vanishes outside $(\e-d, 2\e-d)$, and
$f_u^{(l)}$ is pliable at $-d$ by Lemma~\ref{lem: f_phi'' pliable}. 
Using notations from the 
definition of a pliable series we write
\[
(\Phi_u^\e)^{(s)}f_u^{(l)}=\sum_{k\in J_\e} c_k g_k \frac{\Phi^{(s)}}{(2\e)^{s}}
\] 
where $|J_\e|\le L$ and $\e\ge 2^{-kL}$ for each $k\in J_\e$.
By local finiteness of $\{\supp g_k\}$
the smallest $k\in J_\e$ tends to $\infty$
as $\e\to 0$.
For every $k\in J_\e$ the summand $|c_k (2\e)^{-s} g_k \Phi^{(s)}|$ is bounded above
by $c_k 2^{kLs-s}|g_k\Phi^{(s)}|$ which tends to $0$ as $\e\to 0$.
Since we have at most $L$ such summands, we get
an upper bound on $|(\Phi_u^\e)^{(s)}f_u^{(l)}|$ that is independent of $\e$.

To complete the proof fix any $d_*\in (0,\frac{\tau}{4})$, $\g_*\in (0,\frac{\pi}{3})$,
and any converging series $\sum_{m>0} 2^m d_m$ with $d_m\in (0,d_*)$.
We showed above that for any positive integers $r$, $m$
there  is $\g_{m,r}<\g_*$ such that for any $\g_m\in (0,\g_{m,r})$
and $\e_m$ with $f^\prime (4\e_m)=\tan\g_m$
the $C^r$ norm of the function $F_{\e_m}$ constructed 
for $(d,\e, \g)=(d_m, \e_m, \g_m)$
is bounded uniformly in $m$.
Passing to the 
diagonal subsequence corresponding to the angle $\g_{m,m}$ 
gives  $F_{\e_m}$ for which every $C^r$ norm is uniformly bounded.
The sum $\sum_{m>0}2^{m+1}\g_m$ takes every value in
$\left(0, \sum_{m>0}2^{m+1}\g_{m,m}\right)$ and in particular, the value $\frac{\pi}{n}$
for some sufficiently large positive integer $n$. Set $\a_m=\pi-2\g_{m}$.
The sum of the sidelengths $l_m+r_m$ of the hinge smoothed by $F_{\e_m}$
is at most $4d_m$, so $\sum_{m>0}2^m(l_m+r_m)$ converges. 
\end{proof}

\begin{rmk}
\label{rmk: finite collection}
The above proof shows that given any finite collection of functions $f$
satisfying the assumptions of Lemma~\ref{lem: smooth hinge}
there is a sequence $(l_m, r_m, \a_m)$ such that the conclusion of 
the lemma holds for every $f$ in the collection. 
\end{rmk}

\section{Building the curve}
\label{sec: building a curve}

In this section we piece together the hinges produced in Section~\ref{sec: smoothing a hinge}
to form a $C^\infty$ closed convex curve. In doing so we are able to prescribe the Gauss image
of the set of points of zero curvature, which will be a Cantor-like set. The schematic below 
illustrates the construction.

\begin{figure}[h]
\caption{The thickened curves
represent the smoothings that fit together following the pattern
of the middle third intervals in the complement of the standard Cantor set.}
\centering
\includegraphics[scale=.6]{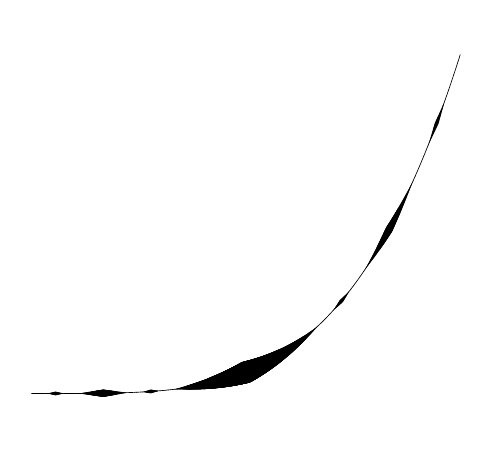}
\end{figure}

Fix a sequence of hinges modelled near the endpoints on a function $f$
as in Lemma~\ref{lem: smooth hinge},
and build a sequence of smooth convex curves $J_m$ in $\R^2$ as follows.

On the $x$-axis in $\R^2$ fix a 
closed interval $J_0$ of length $\sum_{m>0} 2^m(l_m+r_m)$.
Let $p$ be the left endpoint of $J_0$.
Inside $J_0$ mark the middle open interval $I_0^1$ of length $l_1+r_1$,
where ``middle'' means that $J_0$ and $I_0^1$ have the same midpoint.
Also mark a point $p_0^1\in I_0^1$ that divides
$I_0^1$ into two intervals, the left one of length $l_1$
and the right one of length $r_1$.
Bend $J_0$ at $p_0^1$ into a hinge with angle $\pi-2\g_1$
while keeping the basepoint $p$ fixed.
Then $I_0^1$ becomes a hinge, and we
replace the hinge with the convex smoothing given by Lemma~\ref{lem: smooth hinge}.
Call the result $J_1$.

Outside the smoothing $J_1$ consists of two
straight line segments. On each of them mark the middle open interval
of length $l_2+r_2$, denote the intervals $I_1^1$, $I_1^2$, 
and mark point $p_1^1$, $p_1^2$ such that each $p_1^s$ divides
$I_1^s$ into two intervals with the left one of length $l_2$
and the right one of length $r_2$. Bend each $I_1^s$ into a hinge
that forms the angle $\g_2$ with $J_1$ so that
$p$ stays fixed.
Replace each hinge with
the convex smoothing given by Lemma~\ref{lem: smooth hinge}.
Call the result $J_2$.

Iterating the construction on the $m$th step we bend $2^m$ segments
into hinges while keeping $p$ fixed
and replace each of the by convex smoothings, 
which yield a curve $J_m$. 

Each $J_m$ is the graph of a convex $C^\infty$ function $h_m$ whose
$C^r$ derivatives are bounded uniformly in $m$ by
Lemmas~\ref{lem: rotation}, \ref{lem: smooth hinge}. 
Because of the way we bend we have $h_m\le h_{m+1}$ for each $m$, hence
the sequence $(h_m)$ has a limit $h$
which is therefore $C^\infty$ and convex. In fact, it is strictly
convex as by construction it contains no straight line segments.

Once a convex smoothing appears in some $h_m$ its slighly rotated copies show up in
every $h_k$ with $k>m$, and hence eventually in $h$.

The graph of $h$ is infinitely flat at the endpoints. (One way to see it is to start
we the $1$-neighborhood of $J_0$ in place of $J_0$, run the above
procedure over $J_0$ carrying along the two flat segments on the sides.
The result is a $C^\infty$ extension of the graph of $h$ with 
a straight line segments on each side).

The Gauss map takes the graph of $h$ to a circular arc of length $\frac{\pi}{n}$.
Moreover, the images of smoothings are taken to removed intervals of a Cantor-like set, 
and the reflection of $\R^2$ in the axis of symmetry
of this circular arc is an involution on the set of removed intervals.

Piecing together
$2n$ copies of the graph of $h$ gives a $C^\infty$ closed strictly convex curve,
which we denote $C_f$. The curve has rotational symmetry of order $2n$.
The curvature vanishes precisely on the closure
of the set of the endpoints (because the curvature does not vanish
on the interior of the smoothings, and what remains is  Cantor-like set
in which the set of endpoints is dense).
It is convenient to rotate $C_f$ so that the 
Gauss map takes the base point $p$ to $1\in S^1$.

Let $Z_f$ denote the Gauss image of the set of points of zero curvature of $C_f$.
Up to rotation $Z_f$ depends only on the sequence $(\a_m)$.
Also $Z_f$ has an order $2n$ rotational symmetry, and $Z_f^{-1}=Z_f$
due to the above-mentioned axial symmetry of the Gauss map image
of the graph of $h$.

Let us show that for every real $\a$ the sets $e^{i\a}Z_f$ and $Z_f$ intersect,
i.e. $Z_f\cdot Z_f^{-1}=S^1$.  Let $O$ be the preimage
of $Z_f$ under the projection $[0,2\pi]\to S^1$ given by $t\to e^{it}$.
It is enough to show that $O+O\supset [0,2\pi]$. By construction
$O=\bigcup_{l=1}^{2n} O_l$ where $O_{l}\supset\frac{\pi (l-1)}{n}+O_1$ 
and $O_1$ is constructed from $\left[0,\frac{\pi}{n}\right]$
be removing middle thirds of lengths $\g_0, \g_1, \g_2, \dots$.
We can choose $\g_m$ as small as we like, and in particular,
we can arrange that at each step the ratio of the lengths
of each new interval (that remains after removing the middle third)
and the old interval is any number in 
$\left(0,\frac{1}{2}\right)$. By~\cite[Corollary 3.3]{CHM} 
if the ratio is at least $\frac{1}{3}$ on every step, then 
$O_1+O_1=\left[0,\frac{2\pi}{n}\right]$. It follows that
$O_l+O_1\supset\frac{\pi l}{n}+\left[0,\frac{2\pi}{n}\right]$
so that $O+O\supset [0,2\pi]$.

For a convex smoothing produced in Lemma~\ref{lem: smooth hinge} we refer to
its endpoint over $-d$ as the {\em left endpoint}.
Let $E_f\subset Z_f$ be the image under the Gauss map of the set of left endpoints
of convex smoothings that form $C_f$; thus $E_f$ is a countable dense subset of $Z_f$.

\section{Proof of Theorem~\ref{intro: main}}
\label{sec: proof of main thm}

For every $\a\in (0,1)$ 
Boman constructed in~\cite{Bom-sum} two
strictly convex functions $f, g\in C^\infty(\R)$ whose graphs are infinitely flat at
$0$, and such that $h=f\Box g$ is not $C^{4,\a}$.
Specifically, starting from $f_k(x)=a_k^2\frac{x^2}{2}$ patched as in Lemma~\ref{lem: boman} below
to produce $f$, and similarly using $g_k(x)=\frac{x^4}{4}$ to produce $g$, Boman showed that
if $b_k$, $t_k$ are as in Lemma~\ref{lem: boman}  and 
$\frac{a_k^\a}{b_k}\xrightarrow[k\to\infty]{} 0$, then near $2t_k$ the $C^{4,\a}$ norm of $h=f\Box g$
is unbounded as $k\to\infty$.

Consider the curves $C_f$, $C_g$ built as in Section~\ref{sec: building a curve} with inputs $f$, $g$,
respectively. Let $A$, $B$ be convex sets enclosed by $C_f$, $C_g$.
By Remark~\ref{rmk: finite collection}
we may assume that $Z_f=Z_g$ so that $Z_f$ cannot be rotated off $Z_g$.
Also their sets of the left endpoints coincide: $E_f=E_g$. 
If $R\in E_f E_g^{-1}$, then $R$ superimposes a copy of the graph of $f$ over a copy of 
the graph of $g$, and hence $R(A)+B$ is not $C^{4,\a}$ thanks to the above-mentioned result
in~\cite{Bom-sum}. 

It suffices to show that for every $l\in\N$
any point of $E_f E_g^{-1}$ has an open neighborhood in $S^1$
consisting of rotations $Q$ such that the boundary of $Q(A)+B$ either is not $C^{4,\a}$ 
or its $C^{4,\a}$ norm is $>l$.
(Indeed, if $U_l$ is the union of such neighborhoods, then 
$\displaystyle{\bigcap_{l} U_l}$ is a $G_\de$ subset that contains $E_f E_g^{-1}$. 
For every $Q\in \displaystyle{\bigcap_{l} U_l}$ the set $Q(A)+B$ is not $C^{4,\a}$.
Since any set containing a dense $G_\de$ subset is comeager, 
the conclusion of Theorem~\ref{intro: main} follows.)
 
Fix an arbitrary $R_1\in E_f E_g^{-1}$, and set $A_1=R_1(A)$. 
The boundary $A_1+B$ has a tangent line over which
(a portion of) the boundary is (a portion of) the graph of $f\Box g$. 
(Here we identify the tangent line with the $x$-axis on $\R^2$
so that the point of tangency is the origin.)
Fix $k$ such that near $2t_k$ the $C^{4,\a}$ norm of $f\Box g$ is $>l$. 
(It is worth mentioning that when the diameter of the smoothing is small,
so is $t_k$, hence there is no universal $t_k$ that works for every $R_1$.)
Henceforth we vary $x$ near $2t_k$.
Consider $e^{i\de} A_1+B$ where $|\de|\ll \min(b_k, t_k)$; thus we are to rotate the graph of $f$ while keeping $g$ fixed.
Write 
\[
(s+if(s))e^{i\de}=R(s)+iI(s)=y+if_\de(y)\quad\text{so\ that}\quad f_\de=I\circ R^{-1}
\]
where $R(s)=y$, $I(s)=f_\de(y)$ are as in Section~\ref{sec: rotated graphs}.
Let $h_\de=f_\de\Box g$, i.e. 
$h_\de(x)=f_\de(y_x)+g(x-y_x)$ where $y_x$ is a unique solution of
\begin{equation}
\label{form: inf}
f_\de^\prime(y)=g^\prime(x-y).
\end{equation}
Since $f^{\prime\prime}$, $g^{\prime\prime}$
are nonnegative and only vanish at $0$, the formula (\ref{form: 1st 2nd deriv rotation})
for $f_\de^\prime$, $f_\de^{\prime\prime}$ implies that 
the derivative of $f_\de^{\prime}(y)-g^\prime(x-y)$ by $y$
only vanishes if $x=0=y$, which only solves (\ref{form: inf}) for $\de=0$.
Hence by the Implicit Function Theorem the solution $y=y(x,\de)$
of (\ref{form: inf}) is $C^\infty$ near any $(x, \de)\neq (0,0)$, and in particular, near
$(2t_k, 0)$. Because of (\ref{form: h via y(x)}) we have $h_\delta^\prime(x)=f^\prime(y(x,\de))$, and  
hence $h_\de$ is $C^\infty$ near $(2t_k, 0)$.
Since the $C^{4,\a}$ norm of $h_0=f\Box g$ near $2t_k$ is $>l$,
the same is true for $h_\de$ for all small $\de$. 
This completes the proof of Theorem~\ref{intro: main}.

\begin{rmk} Let us justify the claim made
after the statement of Theorem~\ref{intro: main} that the proof can be modified 
to arrange that $A=B$. The only change will be in Section~\ref{sec: building a curve}
where we assemble the curve starting from a Cantor-like set on an interval. 
There let us insert a smoothing modelled on $f$ at odd-numbered steps, and 
a smoothing modelled on $g$ at even-numbered steps. Let $E_f$ be the Gauss map image
of the set of left endpoints of the smoothings modelled on $f$; define $E_g$ similarly.
The only thing that needs verifying is that that both $E_f$ and $E_g$ are dense 
in the Cantor-like set on $S^1$. As in  Section~\ref{sec: building a curve} 
we pull everything to $[0,2\pi]$. In a  standard way we think of a point $x$
in the Cantor-like set as a binary sequence, i.e., if $x$ lies in an interval 
that remained on step $k-1$, then the $k$th term in the sequence records
whether the middle portion of the interval removed on the $k$th step
is to the right or to the left of $x$. Thus the binary sequence specifies 
a sequence of removed intervals that converges to $x$. In the corresponding 
sequence of smoothings those modelled on $f$ and $g$ alternate. So $x$ is a limit
point of $E_f$ as well as of $E_g$.
\end{rmk}

\appendix

\section{Boman's lemma on convex patching}

The following lemma appears in~\cite[page 221]{Bom-sum}
except for the part (a) which is implicit in Boman's proof.
Since (a) is required for our purposes, 
we reproduce the proof (with some details added). 

\begin{lem} 
\label{lem: boman}
For each integer $k\ge 0$ 
let $b_k\in\R$ such that $\{2^k b_k\}$ 
is a decreasing sequence of positive numbers 
that  super-exponentially converges to $0$ as $k\to\infty$,
set $t_k = 4^{-k}$, 
let $f_k\in C^{\infty}([-t_k, t_k])$ such that $f_k^{\prime\prime}$ is positive, 
$f_k(0) = 0=f_k^\prime(0)$ and  
$\displaystyle{\sup_{x,k}}|f_k^{(r)}(x)| < \infty$ for each $r\ge 0$. 
Then there are $K>0$
and $f\in C^{\infty}(\R)$ such that\vspace{-6pt} 
\begin{itemize}
\item[\textup(a)] $f$ equals the sum of a series that is pliable at $0$.\vspace{4pt} 
\item[\textup(b)]
$f^\prime\vert_{(0,\,3t_K]}$ and $f^{\prime\prime}\vert_{(0,\,3t_K]}$ 
are positive, $f^{(r)}(0)=0$ for all $r$,\vspace{5pt}
\item[\textup(c)]
\label{eq: f' and b_k}
$f^\prime (x) = b_k f_k'(x - t_k) + b_k$ for all $k\ge K$ and all
$x$ with $|x - t_k| < t_{k+1}$.
\end{itemize}
\end{lem}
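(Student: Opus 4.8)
The plan is to construct $f$ by specifying $f'$ as a sum of rescaled copies of the $f_k'$, each supported in a tiny window around $t_k$, plus a background linear-growth term that guarantees strict convexity on the whole positive axis. Concretely, I would fix a single $C^\infty$ bump $\chi$ with $\chi\equiv 1$ on $[-\tfrac12,\tfrac12]$ and $\operatorname{supp}\chi\subset[-1,1]$, and set, for $k$ large,
\[
\psi_k(x)=\chi\!\left(\frac{x-t_k}{t_{k+1}}\right),\qquad
g_k(x)=b_k\bigl(f_k'(x-t_k)\psi_k(x)\bigr),
\]
so that $g_k$ is supported in $|x-t_k|\le t_{k+1}=4^{-(k+1)}$, hence in a compact subset of $(0,\infty)$, and the supports are pairwise disjoint for $k$ large since consecutive intervals $[t_{k+1},3t_{k+1}]\ni t_k$ shrink geometrically. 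Because $f_k(0)=f_k'(0)=0$ and $\sup_{x,k}|f_k^{(r)}|<\infty$, each derivative $f_k^{(r)}$ is bounded; the scaling $x\mapsto x-t_k$ contributes no derivative loss, while each derivative of $\psi_k$ costs a factor $t_{k+1}^{-1}=4^{k+1}$, so $\|g_k\|_{C^r}\le C_r\,b_k\,4^{(k+1)r}=C_r\,(2^k b_k)\cdot 2^{-k}4^{(k+1)r}$, which grows at most exponentially in $k$ because $2^kb_k$ is bounded. That is exactly condition (ii) of pliability. I would then define $f'(x)=c x+\sum_{k\ge K} g_k(x)$ for a suitable slope $c>0$ and on $(-\infty,0]$ continue $f'$ as $cx$ (or simply take the pliable series on $[0,\infty)$ and reflect/extend trivially), and let $f$ be its antiderivative with $f(0)=0$. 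By Lemma~\ref{lem: pliable}(a),(b) this series is $C^\infty$ and $f^{(r)}(0)=0$ for all $r$, giving (b)'s flatness claim.

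For (c): on the window $|x-t_k|<t_{k+1}$ we have $\psi_k\equiv 1$ and all other $g_j$ vanish there, so $f'(x)=cx+b_k f_k'(x-t_k)$. The statement wants $f'(x)=b_kf_k'(x-t_k)+b_k$, i.e. the background term should read $b_k$, not $cx$, on that window. The fix is to not use a literal linear term but a background term $\beta(x)$ that is $C^\infty$, strictly increasing, and that happens to equal the constant $b_k$ on each window $|x-t_k|<t_{k+1}$ — for instance interpolate the values $b_k$ at the points $t_k$ by a smooth convex decreasing function of $x>0$ that is locally constant ($=b_k$) near each $t_k$; since $\{2^kb_k\}$, hence $\{b_k\}$, is decreasing and super-exponentially small, such an interpolation with all derivatives super-exponentially small near $0$ exists and is itself pliable at $0$ by the same estimates. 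Adding this $\beta$ to $\sum g_k$ keeps the sum pliable (Lemma~\ref{lem: pliable}(c), after noting $\beta$ is pliable), and on the window $f'(x)=\beta(x)+b_k f_k'(x-t_k)=b_k+b_kf_k'(x-t_k)$, which is (c). For (a) we observe $f'$ equals a pliable series and $f$ is its term-by-term antiderivative plus the constant $0$; an antiderivative of a pliable series (integrating from $0$) is again pliable — conditions (i),(iii) are unchanged and (ii) only improves under integration — so (a) holds; alternatively cite Lemma~\ref{lem: pliable} directly for $f'$ and note (a) as stated only asserts $f$ is such a sum, which follows.

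It remains to get strict positivity of $f'$ and $f''$ on $(0,3t_K]$ for $K$ large. Here $f''(x)=\beta'(x)+\sum_k b_k f_k''(x-t_k)\psi_k(x)+\sum_k b_k f_k'(x-t_k)\psi_k'(x)$. On the core window $|x-t_k|\le\tfrac12 t_{k+1}$ we have $\psi_k'=0$ and $f_k''>0$, $\beta'>0$ — wait, $\beta$ is decreasing, so $\beta'<0$; this is the main obstacle. The remedy is a choice-of-scales argument exactly as in the proof of Lemma~\ref{lem: smooth hinge}: the dangerous terms $b_kf_k'(x-t_k)\psi_k'(x)$ and $\beta'(x)$ are $O(b_k 4^k)=O(2^kb_k\cdot 2^{-k})$ and $o(\text{anything})$ respectively near $0$, whereas on the core window the good term $b_kf_k''(x-t_k)$ is bounded below only by $b_k\cdot(\inf f_k'')$, which need not be uniform — so instead I would argue window-by-window, shrinking if necessary the effective support (replace $t_{k+1}$ by $\theta_k t_{k+1}$ with $\theta_k\downarrow 0$ chosen so the cross term $b_kf_k'(x-t_k)\psi_k'(x)=O(b_k(\theta_kt_{k+1})^{-1}\cdot\sup|f_k'|)$ and $|\beta'|$ are dominated by the one-sided contribution coming from the positivity of $f'$ picked up across the previous windows). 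More simply: since $f_k'(0)=0$ and $f_k''>0$, on $|x-t_k|<\theta_kt_{k+1}$ we have $|f_k'(x-t_k)|\le \theta_kt_{k+1}\sup|f_k''|$, so the cross term is $\le b_k\theta_kt_{k+1}\sup|f_k''|\cdot\theta_k^{-1}t_{k+1}^{-1}\sup|\chi'|=b_k\sup|f_k''|\sup|\chi'|$, which is summable and can be made arbitrarily small by discarding finitely many $k$ (taking $K$ large) and by first rescaling $\chi$ to have small $C^1$ norm — and the monotone background $\beta$ can be chosen with $|\beta'|$ as small as we like (spreading the drop $b_K\to 0$ over all of $(0,t_K)$) so that $f'>0$ throughout and $f''=\beta'+(\text{positive core})+(\text{small cross})>0$ on each core window, while between windows $f''=\beta'>0$ fails — so there I instead keep $f''=\beta'$ only where $\beta$ is not locally constant, which I arrange to be an arbitrarily small open set, and note we only need $f''>0$, not a lower bound: choose $\beta$ constant off an arbitrarily thin neighborhood of the points $t_k$ and increasing in between from $b_{k+1}$ up to $b_k$, i.e. make $\beta$ increasing, not decreasing — but $b_k$ decreasing forces $\beta(t_k)=b_k$ decreasing, contradiction. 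The honest resolution, which I expect to be the crux, is to not demand $f''>0$ from $\beta$ at all but from the union of the $f_k''$ cores being dense enough after the scale choice; this is precisely the delicate point handled in~\cite{Bom-sum}, and I would follow Boman's patching estimates verbatim, the new content being only the bookkeeping that the resulting series meets the three pliability conditions, which the support and $C^r$-growth computations above already establish.
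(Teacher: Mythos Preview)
Your proposal has a real gap at the strict-convexity step in (b). By writing $f'=\beta+\sum_k b_k f_k'(\cdot-t_k)\psi_k$ you force yourself to differentiate the cutoff, producing cross terms $b_k f_k'(x-t_k)\psi_k'(x)$ that are negative on the outer part of each window (e.g.\ for $x>t_k$ one has $f_k'>0$ and $\psi_k'<0$). Your own computation gives this term size $\approx b_k\sup|f_k''|\sup|\chi'|$, independent of the window width $\theta_k t_{k+1}$, while the competing positive term $b_k f_k''\psi_k$ tends to $0$ at the boundary of $\supp\psi_k$; so shrinking $\theta_k$ cannot help, and ``rescaling $\chi$ to have small $C^1$ norm'' is impossible with fixed support and $\chi(0)=1$ (mean value theorem gives $\sup|\chi'|\ge 1$). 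Compensating with $\beta'$ also fails on the region where $\beta$ is forced to be flat to make (c) hold. You correctly flag this as the crux but then defer to ``Boman's patching estimates verbatim'' without saying what those are, so the argument is incomplete. (There is also a minor mismatch: with $\chi\equiv 1$ only on $[-\tfrac12,\tfrac12]$ your identity in (c) holds only on $|x-t_k|<\tfrac12 t_{k+1}$, not on the full interval $|x-t_k|<t_{k+1}$.)

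The paper avoids the cross terms entirely by working one derivative higher and dispensing with the background $\beta$. One fixes a single bump $\Psi$ with $\sum_{m\in\Z}\Psi(2^mx)\equiv 1$ on $(0,\infty)$, sets $\Psi_j(x)=\Psi(2^jx)$, and \emph{defines}
\[
f''(x)=\sum_{k\ge K} b_k f_k''(x-t_k)\,\Psi_{2k}(x)\;+\;\sum_{k\ge K}\alpha_k\,\Psi_{2k-1}(x),
\]
with $f(0)=f'(0)=0$. The even-indexed bumps sit over the $t_k$ and satisfy $\Psi_{2k}\equiv 1$ on $|x-t_k|\le\tfrac14 t_k=t_{k+1}$, so on that interval $f''=b_k f_k''(x-t_k)$ and (c) reduces to the single condition $f'(t_k)=b_k$. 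The odd-indexed bumps fill the gaps between consecutive $t_k$'s, and the constants $\alpha_k$ are \emph{forced} by the linear equations $f'(t_{k-1})-f'(t_k)=b_{k-1}-b_k$; a short estimate using $b_k/b_{k-1}<\tfrac12$ shows $\alpha_k>0$ for all large $k$, which fixes $K$. Now every summand in $f''$ is nonnegative and, by the partition-of-unity property, their supports cover $(0,3t_K]$; hence $f''>0$ there with no further work. Pliability of $f$ then follows from the super-exponential decay of $b_k$ (and of the resulting $\alpha_k$) against the at-most-exponential growth of the $C^r$ norms of $\Psi_j$ and $f_k''$, essentially the bookkeeping you already did; the point is that no cutoff is ever differentiated against an $f_k'$.
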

\begin{proof}
Fix $\Psi \in C^{\infty}(\mathbb{R})$ such that
$\Psi \geq 0$, $\Psi\vert_{[\frac{3}{4}, \frac{5}{4}]} = 1$, 
the set where $\Psi>0$ is precisely $(\frac{2}{3}, \frac{3}{2})$, and 
$\sum_{m\in\Z} \Psi (2^m x)=1$ for any $x>0$.
To arrange for the last property divide any $\Psi$
that satisfies the other three properties by
$\sum_{m\in\Z} \Psi (2^m x)$. This works because
\begin{itemize}
\item 
any positive number lies in either one or two intervals of the
form $2^{-m}(\frac{2}{3}, \frac{3}{2})$ so
that near each point the sum is finite and positive, 
and $[\frac{3}{4}, \frac{5}{4}]$
intersects exactly one of these intervals
so that $\sum_{m\in\Z} \Psi(2^m x)$ is a $C^\infty$ function 
which equals $1$ on
$[\frac{3}{4}, \frac{5}{4}]$ and is positive on $(0,\infty)$,
\item
substituting $x$ with $2^l x$, $l\in\Z$, does not change $\sum_{m\in\Z} \Psi (2^m x)$.
\end{itemize}

For an integer $k\ge 0$ set $\Psi_k(x) = \Psi(2^k x)$ so that
$\supp\Psi_{2k}= [\frac{2}{3}t_k, \frac{3}{2}t_k]$ and 
$\supp\Psi_{2k-1} =[\frac{4}{3}t_k, 3t_k]$. 
We search for $f$ by solving
\begin{equation} 
\label{eq:1}
f^{\prime\prime}(x) = 
{\displaystyle\sum_{k\geq K}} 
b_k f_k^{\prime\prime}(x-t_k) \Psi_{2k}(x) + 
\sum_{k \geq K}\alpha_k \Psi_{2k-1}(x)
\end{equation}
subject to the initial conditions $f(0)=0=f^\prime(0)$,
where $K$ and $\alpha_k$ are to be determined. 

Since $f_k^{\prime\prime}(x-t_k)$ is defined for $x\in [0,2t_k]$
and $\Psi_{2k}(x)$ vanishes outside $(\frac{2}{3}t_k, \frac{3}{2}t_k)$
we can think of $f_k^{\prime\prime}(x-t_k) \Psi_{2k}(x)$ as a $C^\infty$
function on $\R$.
The supports of $\Psi_{2k}$'s are disjoint, and so are the supports
of $\Psi_{2k-1}$'s, which implies that near each nonzero point
the right hand side of (\ref{eq:1}) has at most two summands, and 
therefore is $C^\infty$ away from $0$. 

On the interval $[\frac{3}{4}t_k, \frac{5}{4}t_k]$ we have $\Psi_{2k} = 1$
and $\Psi_{l}=0$ for any $l\neq 2k$, and therefore 
$f^{\prime\prime}(x) = b_k f_k^{\prime\prime}(x - t_k)$ on 
$[\frac{3}{4}t_k, \frac{5}{4}t_k]$. Therefore the part (c) of (\ref{eq: f' and b_k})
is equivalent to $f'(t_k) = b_k$. To achieve the latter identity we write
\[
b_{k-1} - b_k=
f'(t_{k-1}) - f'(t_k) = 
\int_{t_k}^{4t_k}f''(x) dx = b_{k}A_k + b_{k-1}B_k + \alpha_k  D_k\]
where the last equality comes from (\ref{eq:1}) and
{\footnotesize
\begin{equation*}
A_k=\int_{t_k}^{\frac{3}{2} t_k}\Psi_{2k}(x)f_k''(x-t_k) dx\qquad
B_k= \int_{\frac{8}{3}t_k}^{4t_k}\Psi_{2k-2}(x)f_{k-1}''(x-t_{k-1})dx\qquad 
D_k=\int_{\frac{4}{3}t_k}^{3t_k}\Psi_{2k-1}(x) dx
\end{equation*}}

because the only nonzero summands of $f^{\prime\prime}\vert_{[t_k, 4t_k]}$
are those containing 
$\Psi_{2k}$, $\Psi_{2k-1}$, $\Psi_{2k-2}$ 
whose respective supports $[\frac{2}{3}t_k, \frac{3}{2}t_k]$,
$[\frac{4}{3}t_k, 3t_k]$, $[\frac{8}{3}t_k, 6t_k]$
intersect $[t_k, 4t_k]$. Note that 
$A_k$, $B_k$, $D_k$ are nonnegative, being the integrals
of nonnegative functions, and moreover,
$D_k=2t_k\int_\R\Psi$ is positive
so we can write
\begin{equation}
\label{eq: alpha_k}
\a_k=\frac{b_{k-1} - b_k - b_k A_k - b_{k-1}B_k}{D_k}=
\frac{b_{k-1}(1-B_k)-b_k(1+A_k)}{D_k}.
\end{equation}
Let us show that $\a_k>0$ for all sufficiently large $k$. 
Since $b_{k-1}>0$ and $A_k\ge 0$ the desired inequality $\a_k>0$
can be rewritten as 
\begin{equation}
\frac{b_k}{b_{k-1}} < \frac{1-B_k}{1+A_k}.
\end{equation}
Set $M_r=\displaystyle{\sup_{x,k}}|f_k^{(r)}(x)|$
so that $A_k\le M_2(\frac{3t_k}{2}-t_k)$ and 
$B_k\le M_2(4t_k-\frac{8t_k}{3})$, which implies that
$\frac{1-B_k}{1+A_k}$ is bounded below by 
$\frac{1-4M_2t_k/3}{1+M_2 t_k/2}$
which tends to $1$ as $k\to\infty$. 
Since the sequence $(2^k b_k)$ is decreasing, we have 
$\frac{b_k}{b_{k-1}} < \frac{1}{2}$, and therefore
there exists $K$ such that for $\a_k>0$ for all $k \ge K$.

Now we are ready to analyze the behavior of $f$ in a
neighborhood of $0$. For $l\ge K$ let $Q_l$ be the partial
sum over $k\in [K, l]$ of the right hand side of (\ref{eq:1}). 
Let $q_{_l}$ be the unique solution of $f^{\prime\prime}=Q_l$, $f(0)=0=f^\prime(0)$;
note that $q_{_l}\in C^\infty(\R)$ and $q_{_l}(x)=0$ for all $x<C_l$ where
$C_l>0$ and $C_l\to 0$ as $l\to\infty$.
	
Since $A_k$ and $B_k$ tend to zero as $k\to\infty$, and $D_k$ decays like $t_k$, 
(\ref{eq: alpha_k}) implies 
that the sequence $(\a_k)$ super-exponentially converges to $0$. 
Together with uniform upper bound $|f_k^{(r)}|\le M_r$
this gives a uniform upper bounds on $|Q_l^{(r)}|$,
so by the Arzel{\`a}-Ascoli theorem the right hand side of (\ref{eq:1})
is $C^\infty$ near $0$, and hence so is the solution $f$ of (\ref{eq:1}).
Now it is straightforward to check that the 
solution of (\ref{eq:1}) is a pliable series at $0$; in particular,
$f^{(r)}(0)=0$ for each $r$.

Since $f_k^{\prime\prime}(x-t_k)>0$ for $x\in [0,2t_k]$,
on this interval 
we have $f_k^{\prime\prime}(x-t_k) \Psi_{2k}(x)>0$ if and only if $\Psi_{2k}(x)>0$.
Since $(0, 3t_K)$  is covered by the intervals where 
one of the functions $\Psi_{2k}$, $\Psi_{2k-1}$, $k\ge K$ is positive,
and since $\a_{k\ge K}$, $b_k$ are positive,
we conclude $f^{\prime\prime}>0$ on $(0, 3t_K]$, and integrating gives
$f^{\prime}>0$ on $(0, 3t_K]$.
\end{proof}

\bibliographystyle{amsalpha}
\bibliography{min}
\end{document}